\newtheorem{theorem}{Theorem}[section]
\newtheorem{lemma}[theorem]{Lemma}
\newtheorem{cor}[theorem]{Corollary}
\theoremstyle{definition}
\newtheorem{defi}[theorem]{Definition}
\theoremstyle{remark}
\newtheorem{remark}[theorem]{Remark}
\numberwithin{equation}{section}
\newcommand{\rr}{{\mathbb R}}
\newcommand{\nat}{{\mathbb N}}
\newcommand{\ganz}{{\mathbb Z}}
\newcommand{\Exp}{{\mathbb E}}
\DeclareMathOperator{\sign}{sign}
\begin{document}

\sloppy
\title[Self-similar Bernstein functions and semi-fractional derivatives]{On self-similar Bernstein functions and corresponding generalized fractional derivatives} 
\author{Peter Kern}
\address{Peter Kern, Mathematical Institute, Heinrich-Heine-University D\"usseldorf, Universit\"atsstr. 1, D-40225 D\"usseldorf, Germany}
\email{kern\@@{}hhu.de}

\author{Svenja Lage}
\address{Svenja Lage, Mathematical Institute, Heinrich-Heine-University D\"usseldorf, Universit\"atsstr. 1, D-40225 D\"usseldorf, Germany}
\email{Svenja.Lage@uni-duesseldorf.de} 

\date{\today}

\begin{abstract}
We use the theory of Bernstein functions to analyze power law tail behavior with log-periodic perturbations which corresponds to self-similarity of the Bernstein functions. Such tail behavior appears in the context of semistable L\'evy processes. The Bernstein approach enables us to solve some open questions concerning semi-fractional derivatives recently introduced in \cite{KLM} by means of the generator of certain semistable L\'evy processes. In particular it is shown that semi-fractional derivatives can be seen as generalized fractional derivatives in the sense of \cite{Koc}. 
\end{abstract}

\keywords{Power law tails, log-periodic behavior, Laplace exponent, Bernstein functions, self-similarity, discrete scale invariance, semistable L\'evy process, semi-fractional derivative, semi-fractional diffusion, Sonine kernel, Sibuya distribution, space-time duality}
\subjclass[2010]{Primary 26A33; Secondary 35R11, 44A10, 60E07, 60G22, 60G51, 60G52.}

\dedicatory{Dedicated to the memory of Mark Meerschaert (1955-2020)}

\maketitle

\baselineskip=18pt

\section{Introduction}

A non-negative function $\widetilde\psi:(0,\infty)\to(0,\infty)$ is called a {\it Bernstein function} if it is of class $C^{\infty}(0,\infty)$ and
\begin{equation}\label{Bern1}
(-1)^{n-1}\widetilde\psi^{(n)}(x)\geq0\quad\text{for all $n\in\nat$ and $x>0$}.
\end{equation}
Its first derivative $f=\widetilde\psi'$ is a {\it completely monotone} function, i.e.\
\begin{equation}\label{common1}
(-1)^{n}f^{(n)}(x)\geq0\quad\text{for all $n\in\nat_{0}$ and $x>0$}.
\end{equation}
Due to a celebrated result of Bernstein, the completely monotone function $f$ is the Laplace transform of a unique Borel measure $\mu$ on $[0,\infty)$
\begin{equation}\label{common2}
f(x)=\widetilde\mu(x):=\int_{0}^{\infty}e^{-xt}\,d\mu(t).
\end{equation}
As a consequence, the Bernstein function admits the representation
\begin{equation}\label{Bern2}
\widetilde\psi(x)=a+bx+\int_{0}^{\infty}\left(1-e^{-xt}\right)\,d\phi(t)
\end{equation}
for a unique triplet $[a,b,\phi]$, where $a,b\geq0$ and $\phi$ is a Borel measure on $(0,\infty)$ satisfying $\int_{0}^{\infty}\min\{1,t\}\,d\phi(t)<\infty$, also called the {\it L\'evy measure}. For details on Bernstein functions, completely monotone functions and their connection to stochastic processes we refer to the monograph \cite{SSV}. It is well known that in case $a=b=0$ the Bernstein function $\widetilde\psi(x)=\int_{0}^{\infty}\left(1-e^{-xt}\right)\,d\phi(t)$ is the Laplace exponent of a {\it L\'evy subordinator} $(X_{t})_{t\geq0}$, i.e.\ $\Exp[\exp(-sX_{t})]=\exp(-t\cdot\widetilde\psi(s))$ for all $t\geq0$, $s>0$, where $(X_{t})_{t\geq0}$ is a L\'evy process with almost surely non-decreasing sample paths.

In Section 2 we will introduce a self-similarity property for Bernstein functions which is intimately connected to the following class of functions. We call a function $\theta:\rr\to\rr$ {\it admissable} with respect to the parameters $\alpha\in(0,2)\setminus\{1\}$ and $c>1$ if the following three conditions are fulfilled:
\begin{align}
& \theta(x)>0 \text{ for all }x\in\rr, \label{cond1} \\
& \text{the mapping } t\mapsto t^{-\alpha}\theta(\log t)\text{ is non-increasing for }t>0, \label{cond2} \\
& \theta \text{ is }\log(c^{1/\alpha})\text{-periodic.} \label{cond3} 
\end{align}
In case $\alpha\in(0,1)$ we use an admissable function $\theta$ to define
\begin{equation}\label{tailLm+}
\phi(t,\infty):=t^{-\alpha}\theta(\log t)\quad\text{ for all }t>0
\end{equation}
as the positive tail of a L\'evy measure $\phi$ concentrated on $(0,\infty)$ which belongs to a semistable distribution $\nu$ with log-characteristic function
\begin{equation}\label{logchar+}
\psi(x)=\int_{0}^\infty\left(e^{ixy}-1\right)\,d\phi(y)\quad\text{ for all }x\in\rr
\end{equation}
given uniquely by the Fourier transform $\widehat\nu(x)=\int_\rr e^{ixy}\,d\nu(y)=\exp(\psi(x))$. The corresponding L\'evy process $(X_{t})_{t\geq0}$ given by $\Exp[\exp(ix\cdot X_{t})]=\exp(t\cdot\psi(x))$ for all $t\geq0$ and $x\in\rr$ is called a {\it semistable subordinator}. For details on semistable distributions and L\'evy processes we refer to the monographs \cite{MMMHPS, Sato}. The power law tail behavior with log-periodic perturbations of an admissable function $\theta$ naturally appears in various applications of natural sciences and other areas; see \cite{Sor1} and section 5.4 in \cite{Sor2}. In recent years the asymptotic fine structure of the corresponding measure $\phi$ and the function $\psi$ has drawn some attention; cf.\ \cite{Kev} and \cite{KMX}. Given an admissable function $\theta$ with respect to $\alpha\in(0,1)$ and $c>1$ with corresponding semistable L\'evy measure $\phi$ given by \eqref{tailLm+}, by Definition 2.2 in \cite{KLM} the {\it semi-fractional derivative} $\frac{\partial^\alpha}{\partial_{c,\theta}x^\alpha}$ of order $\alpha\in(0,1)$ is given by the non-local operator
\begin{equation}\label{semifd}
\frac{\partial^\alpha}{\partial_{c,\theta}x^\alpha}\,f(x):=-Lf(x)=\int_0^\infty\left(f(x)-f(x-y)\right)\,d\phi(y),
\end{equation}
where $L$ is the generator of the corresponding semistable L\'evy process and at least functions $f$ in the Sobolev space $W^{2,1}(\rr)$ belong to the domain of the semi-fractional derivative. In terms of the Fourier transform we can equivalently rewrite \eqref{semifd} as 
\begin{equation}\label{semifdF}
\widehat{\tfrac{\partial^\alpha}{\partial_{c,\theta}x^\alpha}\,f}(x):=-\widehat{Lf}(x)=-\psi(x)\,\widehat{f}(x)\quad\text{ for all }x\in\rr,
\end{equation}
where the Fourier transform of $f$ is given by $\widehat{f}(x)=\int_\rr e^{ixy}f(y)\,dy$; see \cite{KLM} for details. In Section 2, we will start with the elementary observation that for $\alpha\in(0,1)$ there is a one-to-one correspondence between self-similar Bernstein functions given as the Laplace exponent $\widetilde\psi(x)=-\psi(ix)$ for $x>0$ and semistable L\'evy measures $\phi$ of the form \eqref{tailLm+}. In particular, this enables us to show that semi-fractional derivatives of order $\alpha\in(0,1)$ can be seen as a special case of generalized fractional derivatives in the sense of \cite{Koc}. A constant function $\theta$ corresponds to the complete Bernstein function $\widetilde\psi(x)=x^{\alpha}$ and an ordinary fractional derivative of order $\alpha\in(0,1)$. For details on classical fractional derivatives we refer to the monographs \cite{KST,Pod,SKM}.

In Section 3 we will prove a discrete approximation formula of the generator in \eqref{semifd} involving a generalized Sibuya distribution given in terms of the self-similar Bernstein function.

In case $\alpha\in(1,2)$ we use an admissable function $\theta$ to define
\begin{equation}\label{tailLm-}
\phi(-\infty,-t):=t^{-\alpha}\theta(\log t)\quad\text{ for all }t>0
\end{equation}
as the negative tail of a L\'evy measure $\phi$ concentrated on $(-\infty,0)$ which belongs to a different semistable distribution $\nu$ with log-characteristic function
\begin{equation}\label{logchar-}
\psi(x)=\int_{-\infty}^0\left(e^{ixy}-1-ixy\right)\,d\phi(y)\quad\text{ for all }x\in\rr.
\end{equation}
Given an admissable function $\theta$ with respect to $\alpha\in(1,2)$ and $c>1$ with corresponding semistable L\'evy measure $\phi$ given by \eqref{tailLm-}, by Definition 2.5 in \cite{KLM} the {\it negative semi-fractional derivative} $\frac{\partial^\alpha}{\partial_{c,\theta}(-x)^\alpha}$ of order $\alpha\in(1,2)$ is given by the non-local operator
\begin{equation}\label{negsemifd}
\frac{\partial^\alpha}{\partial_{c,\theta}(-x)^\alpha}\,f(x):=Lf(x)=\int_0^\infty\left(f(x+y)-f(x)-y\,f'(x)\right)\,d\phi(-y),
\end{equation}
where $L$ is again the generator of the corresponding semistable L\'evy process and at least functions $f$ in the Sobolev space $W^{2,1}(\rr)$ belong to to the domain of the semi-fractional derivative. For $\alpha\in(1,2)$ the function $x\mapsto\psi(-ix)$ cannot be a Bernstein function, but we will show in Section 4 that it has an inverse which is a self-similar Bernstein function. This will enable us to solve an open question from \cite{KL} concerning space-time duality for semi-fractional differential equations.

\section{Self-similar Bernstein functions and semi-fractional derivatives}

In this section we suppose that $\alpha\in(0,1)$ in which case the Laplace exponent $\widetilde\psi$ of the corresponding semistable distribution $\nu$, uniquely given by the Laplace transform $\widetilde\nu(x)=\int_0^\infty e^{-xy}\,d\nu(y)=\exp(-\widetilde\psi(x))$ for $x>0$, in view of \eqref{logchar+} can be represented as
\begin{equation}\label{replogL}
\widetilde\psi(x) =-\psi(ix)=\int_0^\infty \left(1-e^{-xy}\right)\,d\phi(y).
\end{equation}
Clearly, $\widetilde\psi$ is a Bernstein function since $\phi$ integrates $\min\{1,t\}$ on $(0,\infty)$.
\begin{defi}
We call a Bernstein function $\widetilde\psi$ {\it self-similar} with respect to $\alpha\in(0,1)$ and $c>1$ if it admits the discrete scale invariance
$$\widetilde\psi(c^{1/\alpha}x)=c\cdot\widetilde\psi(x)\quad \text{ for all $x>0$.}$$
\end{defi}
The following elementary observation is our key result.
\begin{lemma}\label{gammaadm}
For fixed $\alpha\in(0,1)$ and $c>1$ the following statements are equivalent.
\begin{enumerate}
\item[(i)] $\widetilde\psi$ is a self-similar Bernstein function with respect to $\alpha\in(0,1)$ and $c>1$.
\item[(ii)] $\widetilde\psi(x)=\int_0^\infty \left(1-e^{-xy}\right)\,d\phi(y)$, where the L\'evy measure $\phi$ is given by \eqref{tailLm+} for some admissable function $\theta$ with respect to $\alpha\in(0,1)$ and $c>1$.
\end{enumerate}
In either case we have $\widetilde\psi(x)=x^\alpha\gamma(-\log x)$ for an admissable $C^\infty(\rr)$-function $\gamma$ with respect to $\alpha\in(0,1)$ and $c>1$.
\end{lemma}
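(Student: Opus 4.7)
The plan is to prove the equivalence by direct computation in both directions using the L\'evy--Khintchine representation (\ref{Bern2}), and then to construct $\gamma$ explicitly from $\widetilde\psi$.

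For (ii) $\Rightarrow$ (i), I first rewrite the log-periodicity (\ref{cond3}) combined with the tail formula (\ref{tailLm+}) as a scaling identity for the L\'evy measure. A short computation gives $\phi(c^{-1/\alpha}u,\infty) = c\,\phi(u,\infty)$ for every $u>0$; phrased in terms of the map $T(y) := c^{1/\alpha}y$ this says $T_{*}\phi = c\phi$. Substituting $u = c^{1/\alpha}y$ in the representing integral then yields
\begin{equation*}
\widetilde\psi(c^{1/\alpha}x) = \int_0^\infty \bigl(1-e^{-xu}\bigr)\,d(T_{*}\phi)(u) = c\,\widetilde\psi(x).
\end{equation*}

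For (i) $\Rightarrow$ (ii), the key is the uniqueness of the triplet $[a,b,\phi]$ in (\ref{Bern2}). Writing out $\widetilde\psi(c^{1/\alpha}x)$ via the substitution above and $c\,\widetilde\psi(x)$ directly, I read off two Bernstein representations with triplets $[a,\,bc^{1/\alpha},\,T_{*}\phi]$ and $[ca,\,cb,\,c\phi]$ respectively. Matching forces $a=0$; since $\alpha\neq 1$ and $c>1$ imply $c^{1/\alpha}\neq c$, it also forces $b=0$; and finally $T_{*}\phi = c\phi$. Setting $\theta(s) := e^{\alpha s}\phi(e^{s},\infty)$ then gives $\phi(t,\infty) = t^{-\alpha}\theta(\log t)$ by construction. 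The scaling $T_{*}\phi = c\phi$ turns directly into $\log(c^{1/\alpha})$-periodicity of $\theta$, and (\ref{cond2}) is automatic since $t\mapsto \phi(t,\infty)$ is a tail. Positivity $\theta>0$ requires an argument: if $\phi(t_{0},\infty)=0$ for some $t_{0}$, iterating the scaling relation gives $\phi(c^{-n/\alpha}t_{0},\infty)=0$ for all $n$, hence $\phi\equiv 0$ on $(0,\infty)$, contradicting $\widetilde\psi$ being (strictly) positive.

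For the last assertion, I define $\gamma(s) := e^{\alpha s}\widetilde\psi(e^{-s})$, so that $\widetilde\psi(x) = x^{\alpha}\gamma(-\log x)$ by construction. The $C^{\infty}$ and positivity properties are inherited from the Bernstein function $\widetilde\psi$. Log-periodicity is a one-line check using self-similarity:
\begin{equation*}
\gamma(s+\log c^{1/\alpha}) = e^{\alpha s}\cdot c\cdot \widetilde\psi(c^{-1/\alpha}e^{-s}) = e^{\alpha s}\widetilde\psi(e^{-s}) = \gamma(s),
\end{equation*}
using $\widetilde\psi(c^{-1/\alpha}y) = c^{-1}\widetilde\psi(y)$. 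Condition (\ref{cond2}) reduces to $t\mapsto t^{-\alpha}\gamma(\log t) = \widetilde\psi(1/t)$ being non-increasing, which is just monotonicity of the Bernstein function $\widetilde\psi$.

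The main obstacle I expect is the positivity of $\theta$ in step (i) $\Rightarrow$ (ii); unlike the other admissibility conditions, it does not follow from the scaling identity at a single scale, and needs the iteration argument sketched above together with the global hypothesis that $\widetilde\psi$ takes values in $(0,\infty)$. Everything else is routine manipulation of the L\'evy--Khintchine representation.
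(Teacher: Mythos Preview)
Your proof is correct and follows essentially the same route as the paper: both directions rest on the L\'evy--Khintchine representation and the uniqueness of the Bernstein triplet, and your construction of $\gamma$ is identical to the paper's. The minor differences are that the paper deduces $a=0$ from $\lim_{x\downarrow 0}\widetilde\psi(x)=0$ via iterating the scaling, whereas you get it directly from $a=ca$; and where the paper cites \cite{MMMHPS} and Lemma~A.1 to identify $\phi$ as a semistable L\'evy measure with admissible $\theta$, you instead verify the admissibility conditions by hand, including the positivity of $\theta$ via the iteration argument you flagged --- this makes your version self-contained at no real extra cost.
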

\begin{proof}
{\rm ``(i)$\Rightarrow$(ii)'':} Since $\widetilde\psi$ is a Bernstein function, by \eqref{Bern2} we have 
$$\widetilde\psi(x)=a+bx+\int_0^\infty(1-e^{-xt})\,d\phi(t)$$
for some $a,b\geq0$ and a L\'evy measure $\phi$ on $(0,\infty)$ integrating $\min\{1,t\}$. Iterating the self-similarity relation shows that $\widetilde\psi(c^{m/\alpha}x)=c^m\widetilde\psi(x)$ for all $x>0$ and $m\in\ganz$. As $m\to-\infty$ we see that $\lim_{x\downarrow0}\widetilde\psi(x)=0$ and hence $a=0$. By the transformation rule self-similarity now reads as
\begin{equation}\label{scalingLe}\begin{split}
\widetilde\psi(c^{1/\alpha}x) & =b\,c^{1/\alpha}x+\int_0^\infty(1-e^{-xt})\,d(c^{1/\alpha}\phi)(t)\\
& =b\,cx+c\cdot\int_0^\infty(1-e^{-xt})\,d\phi(t)=c\cdot\widetilde\psi(x),
\end{split}\end{equation}
where $(c^{1/\alpha}\phi)$ denotes the image measure under scale multiplication with $c^{1/\alpha}$.
Since the Bernstein triplet $[a,b,\phi]$ is unique, we must have $b=0$ and $(c^{1/\alpha}\phi)=c\cdot\phi$. Hence by Lemma 7.1.6 and Corollary 7.4.4 in \cite{MMMHPS} we have that $\phi$ is a $(c^{1/\alpha},c)$-semistable L\'evy measure on $(0,\infty)$ fulfilling \eqref{tailLm+} for some admissable function $\theta$ with respect to $\alpha\in(0,1)$ and $c>1$. The connection of Corollary 7.4.4 in \cite{MMMHPS} to admissable functions is made precise by Lemma A.1 in the Appendix.

{\rm ``(ii)$\Rightarrow$(i)'':} Clearly, $\widetilde\psi$ is a Bernstein function by Theorem 3.2 in \cite{SSV} and self-similarity follows from \eqref{scalingLe} with $b=0$ which is valid by Lemma 7.1.6 in \cite{MMMHPS}.

Now define $\gamma(x):=e^{\alpha x}\widetilde\psi(e^{-x})$ which is of class $C^\infty(\rr)$ by the product rule and $\gamma(x)>0$ for all $x\in\rr$. Moreover, $\gamma$ is $\log(c^{1/\alpha})$-periodic since
$$\gamma(x+\log(c^{1/\alpha}))=e^{\alpha x}c\cdot\widetilde\psi(c^{-1/\alpha}e^{-x})=e^{\alpha x}\widetilde\psi(e^{-x})=\gamma(x)$$
and $t\mapsto t^{-\alpha}\gamma(\log t)=\widetilde\psi(t^{-1})$ is non-increasing.
\end{proof}
\begin{remark}
If we assume that $\theta$ is {\it smooth} in the sense that it is continuous and piecewise continuously differentiable, then it admits a Fourier series representation
\begin{equation}\label{Fseries}
\theta(x)=\sum_{k\in\ganz}c_k\,e^{ik\tilde cx}\quad\text{ with }\quad\tilde c=\frac{2\pi\alpha}{\log c}.
\end{equation}
In this case the function $\gamma$ appearing in Lemma \ref{gammaadm} is given by the modified Fourier series
\begin{equation}\label{modFseries}
\gamma(x)=\sum_{k\in\ganz}c_k\,\Gamma(ik\tilde c-\alpha+1)\,e^{ik\tilde cx}\quad\text{ for }x\in\rr
\end{equation}
which can be seen as follows. By Theorem 3.1 in \cite{KLM} the coefficients of this series appear in a representation of the log-characteristic function
\begin{equation}\label{serieslogchar}
\psi(x)=-\sum_{k\in\ganz}c_k\,\Gamma(ik\tilde c-\alpha+1)\,(-ix)^{\alpha-ik\tilde c}\quad\text{ for }x\in\rr
\end{equation}
and the relation $\gamma(x)=e^{\alpha x}\,\widetilde\psi(e^{-x})=-e^{\alpha x}\psi(ie^{-x})$ easily shows \eqref{modFseries}.
\end{remark}
A natural question which arises is if for two admissable functions $\theta_1,\,\theta_2$ with respect to the parameters $\alpha_1\in(0,1)$ and $c_1>1$, respectively $\alpha_2\in(0,1)$ and $c_2>1$, the composition of the corresponding semi-fractional derivatives given by \eqref{semifd} can again be a semi-fractional derivative of order $\alpha:=\alpha_1+\alpha_2$. We concentrate on the easiest case when $\alpha\in(0,1)$ and $\theta_1,\,\theta_2$ have the same periodicity, i.e.\ $c_1^{1/\alpha_1}=c_2^{1/\alpha_2}$. In view of \eqref{semifdF} for the corresponding log-characteristic functions we need to show that 
\begin{equation}\label{sglogchar}
\psi_1(x)\cdot\psi_2(x)=-\psi(x)\quad\text{ for all }x\in\rr,
\end{equation}
where $\psi$ is the log-characteristic function of a semistable distribution corresponding to an admissable function $\theta$ with respect to the parameters $\alpha\in(0,1)$ and $c:=c_1^{\alpha/\alpha_1}=c_2^{\alpha/\alpha_2} >1$. Since \eqref{sglogchar} is equivalent to $\widetilde\psi_1\cdot\widetilde\psi_2=\widetilde\psi$ for the corresponding log-Laplace exponents and we have self-similarity
$$\widetilde\psi(c^{1/\alpha}x)=\widetilde\psi_1(c_1^{1/\alpha_1}x)\cdot\widetilde\psi_2(c_2^{1/\alpha_2}x)=c_1c_2\cdot\widetilde\psi_1(x)\widetilde\psi_2(x)=c^{\alpha_1/\alpha}c^{\alpha_2/\alpha}\cdot\widetilde\psi(x)=c\cdot\widetilde\psi(x),$$
in view of Lemma \ref{gammaadm} this is equivalent to require that $\widetilde\psi$ is a Bernstein function.
\begin{cor}\label{sfdsemigroup}
$ \widetilde\psi_1\cdot\widetilde\psi_2$ is a Bernstein function iff there exists an admissable function $\theta$ with respect to the parameters $\alpha=\alpha_1+\alpha_2\in(0,1)$ and $c=c_1^{\alpha/\alpha_1}=c_2^{\alpha/\alpha_2}>1$ such that
\begin{equation}\label{sfds3}
\frac{\partial^{\alpha_2}}{\partial_{c_2,\theta_2}x^{\alpha_2}}\,\frac{\partial^{\alpha_1}}{\partial_{c_1,\theta_1}x^{\alpha_1}}\,f=\frac{\partial^{\alpha}}{\partial_{c,\theta}x^{\alpha}}\,f
\end{equation}
for suitable functions $f\in W^{2,1}(\rr)$ with $\frac{\partial^{\alpha_1}}{\partial_{c_1,\theta_1}x^{\alpha_1}}\,f$ belonging to the domain of $\frac{\partial^{\alpha_2}}{\partial_{c_2,\theta_2}x^{\alpha_2}}$.
\end{cor}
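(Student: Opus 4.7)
My plan is to read Corollary \ref{sfdsemigroup} as an algebraic consequence of Lemma \ref{gammaadm} together with the Fourier symbol identification \eqref{semifdF}. The central dictionary is the following: on the Fourier side, $\frac{\partial^{\alpha_i}}{\partial_{c_i,\theta_i}x^{\alpha_i}}$ acts as multiplication by $-\psi_i$, so composing two such operators acts as multiplication by $\psi_1\psi_2$; on the Laplace side, $\widetilde\psi_i(x)=-\psi_i(ix)$ converts this to the pointwise product $\widetilde\psi_1\widetilde\psi_2$. Thus \eqref{sfds3} is, modulo regularity issues, equivalent to $\widetilde\psi_1\widetilde\psi_2=\widetilde\psi$ for the log-Laplace exponent $\widetilde\psi$ associated with $\theta$.

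For the direction ``$\Rightarrow$'', the paragraph preceding the statement already verifies that whenever $\widetilde\psi:=\widetilde\psi_1\widetilde\psi_2$ is a Bernstein function, it satisfies $\widetilde\psi(c^{1/\alpha}x)=c\,\widetilde\psi(x)$ with $\alpha=\alpha_1+\alpha_2$ and $c=c_1^{\alpha/\alpha_1}=c_2^{\alpha/\alpha_2}$. Lemma \ref{gammaadm} then produces an admissable $\theta$ with respect to $(\alpha,c)$ whose L\'evy measure \eqref{tailLm+} gives rise to $\widetilde\psi$. Translating the product identity across $x\mapsto -ix$, using that $\psi_1,\psi_2,\psi$ all extend holomorphically to the upper half-plane (their L\'evy measures being concentrated on $(0,\infty)$), yields $\psi_1(x)\psi_2(x)=-\psi(x)$ on $\rr$. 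Multiplying by $\widehat f$ and inverting the Fourier transform via \eqref{semifdF} delivers \eqref{sfds3} for those $f$ whose intermediate Fourier symbols are integrable.

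For ``$\Leftarrow$'', I would run the same chain backwards. Assuming \eqref{sfds3} holds for sufficiently many $f\in W^{2,1}(\rr)$, testing \eqref{semifdF} on a dense class forces $\psi_1\psi_2=-\psi$ pointwise, hence $\widetilde\psi_1\widetilde\psi_2=\widetilde\psi$ on $(0,\infty)$. Since the right-hand side is by construction the Bernstein function arising from Lemma \ref{gammaadm} applied to $\theta$, the product $\widetilde\psi_1\widetilde\psi_2$ is Bernstein as required.

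The only real subtlety, and the reason for the qualifier ``suitable'' in the statement, is domain regularity: one must ensure that $\frac{\partial^{\alpha_1}}{\partial_{c_1,\theta_1}x^{\alpha_1}}f$ lies in the domain of $\frac{\partial^{\alpha_2}}{\partial_{c_2,\theta_2}x^{\alpha_2}}$ and that all intermediate Fourier multipliers produce $L^1$ functions so that Fourier inversion is legitimate. Once that is taken care of on a sufficiently rich subclass of $W^{2,1}(\rr)$, the argument reduces to a short translation through Lemma \ref{gammaadm}; the genuine analytic content already sits in that lemma.
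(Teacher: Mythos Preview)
Your proposal is correct and follows essentially the same route as the paper: the ``proof'' of Corollary~\ref{sfdsemigroup} in the paper is precisely the paragraph preceding its statement, which reduces \eqref{sfds3} via \eqref{semifdF} to the symbol identity $\psi_1\psi_2=-\psi$, translates this to $\widetilde\psi_1\widetilde\psi_2=\widetilde\psi$, verifies self-similarity of the product, and then invokes Lemma~\ref{gammaadm}. Your write-up adds a little more care about the passage between the Fourier and Laplace sides (holomorphic extension) and about the domain qualifier ``suitable'', but the logical skeleton is identical.
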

\begin{remark}
In case $\theta_1,\,\theta_2$ are smooth admissable functions with Fourier series representations as in \eqref{Fseries} 
$$\theta_1(x)=\sum_{k\in\ganz}c_{k,1}\,e^{ik\tilde cx}\quad\text{ and }\quad\theta_2(x)=\sum_{\ell\in\ganz}c_{\ell,2}\,e^{i\ell\tilde cx},$$
where $\tilde c=\frac{2\pi\alpha_1}{\log c_1}=\frac{2\pi\alpha_2}{\log c_2}=\frac{2\pi\alpha}{\log c}$, then by \eqref{serieslogchar} and the Cauchy product rule we easily get
\begin{equation}\label{logLprod}
\widetilde\psi(x)=\widetilde\psi_1(x)\cdot\widetilde\psi_2(x)=\sum_{m\in\ganz}d_m\,\Gamma(im\tilde c-(\alpha_1+\alpha_2)+1)\,x^{\alpha_1+\alpha_2-im\tilde c},
\end{equation}
where
$$d_m=\sum_{\ell\in\ganz}c_{m-\ell,1}c_{\ell,2}\,\frac{\Gamma(i(m-\ell)\tilde c-\alpha_1+1)\,\Gamma(i\ell\tilde c-\alpha_2+1)}{\Gamma(im\tilde c-(\alpha_1+\alpha_2)+1)}.$$
Uniqueness of the Fourier coefficients then gives us the representation
\begin{equation}\label{thetadef}
\theta(x)=\sum_{m\in\ganz}d_m\,e^{im\tilde cx}
\end{equation}
for the admissable function $\theta$ from Corollary \ref{sfdsemigroup}, provided that \eqref{logLprod} is a Bernstein function.
\end{remark}

Finally, we want to show that the semi-fractional derivative of order $\alpha\in(0,1)$ can be seen as a special case of a generalized fractional derivative introduced in \cite{Koc}. Starting with \eqref{semifd} for a smooth admissable function $\theta$, integration by parts shows that 
$$\frac{\partial^\alpha}{\partial_{c,\theta}x^\alpha}\,f(x)=\int_0^\infty f'(x-y)\,y^{-\alpha}\theta(\log y)\,dy$$
as laid out in \cite{KLM}. Introducing the kernel function $k(y):=y^{-\alpha}\theta(\log y)$ and restricting considerations to functions with support on the positiv real line, this can be interpreted as a semi-fractional derivative of Caputo type
$$\frac{\partial^\alpha}{\partial_{c,\theta}x^\alpha}\,f(x)=\int_0^x f'(x-y)\,k(y)\,dy=(k\ast f')(x).$$
On the other hand, interchanging the order of integration and differentiation gives a semi-fractional derivative of Riemann-Liouville type
$$\mathbb D_{c,\theta}^\alpha f(x)=\frac{d}{dx}\int_0^x f(x-y)\,k(y)\,dy=(k\ast f)'(x)$$
which is also called of convolution type in \cite{Toa}. The relationship between these forms is given by the formula
$$\frac{\partial^\alpha}{\partial_{c,\theta}x^\alpha}\,f(x)=\mathbb D_{c,\theta}^\alpha f(x)-f(0)k(x)=:\mathbb D_{(k)}f(x)$$
which can be derived as (2.33) in \cite{MMMSik} and for more general kernel functions  $\mathbb D_{(k)}f$ is called a generalized fractional derivative in \cite{Koc}. For further approaches into this direction see \cite{Asc,KKdS,LRdS,PatSra,SMC,Toa}. Of particular interest are non-negative locally integrable kernel functions $k$ such that the operator $\mathbb D_{(k)}$ possesses a right inverse $\mathbb I_{(k)}$ such that $\mathbb D_{(k)}\mathbb I_{(k)}f=f$. Using the theory of complete Bernstein functions and the relationship to the Stieltjes class, it is shown in \cite{Koc} that this is possible with
$$\mathbb I_{(k)}f(x)=\int_0^x f(y)\,k^\ast(x-y)\,dy$$
for locally bounded measurable functions $f$ if $(k,k^\ast)$ forms a Sonine pair of kernels, i.e.\ $k\ast k^\ast\equiv 1$; cf.\ also \cite{Sonine,SamCar,Wick}. In this case $\mathbb I_{(k)}f$ is called a generalized fractional integral of order $\alpha$ and it also holds that $\mathbb I_{(k)}\mathbb D_{(k)}f(x)=f(x)-f(0)$ for absolutely continuous functions $f$. As shown in \cite{Han}, necessarily the kernel function $k$ must have an integrable singularity at $0$.
We will now show that a Sonine kernel $k^\ast$ may exist for our specific kernel function $k(y)=y^{-\alpha}\theta(\log y)$ working in the more general framework of Bernstein functions and their relation to completely monotone functions. Hence we aim to extend the list of specific kernel functions given in section 6 of \cite{LRdS} by a new example. Therefore we have to relax the definition of a Sonine pair in the following sense.
\begin{defi}
Given a non-negative, locally bounded and measurable function $k$ on $(0,\infty)$ and a sigma-finite Borel measure $\rho$ on $(0,\infty)$ we say that $(k,\rho)$ forms a {\it generalized Sonine pair} if $k\ast\rho\equiv 1$.
\end{defi}
We know that $\widetilde\psi(x)=\int_0^\infty(1-e^{-xt})\,d\phi(t)$ with $\phi$ as in \eqref{tailLm+} is a Bernstein function and thus $G(x)=\frac1{x}\,\widetilde\psi(x)$ is completely monotone by Corollary 3.8 (iv) in \cite{SSV}. Integration by parts yields the Laplace transform
\begin{equation}\label{LTG}
G(x)=\int_0^\infty\tfrac1{x}(1-e^{-xt})\,d\phi(t)=\int_0^\infty e^{-xt}k(t)\,dt=\widetilde k(x)\quad\text{ for }x>0
\end{equation}
of our kernel function $k(y)=y^{-\alpha}\theta(\log y)$. By Theorem 3.7 in \cite{SSV} $G^\ast(x)=1/\widetilde\psi(x)$ is completely monotone since it is the composition of a Bernstein function and the completely monotone function $x\mapsto\frac1{x}$. Hence there exists a Borel measure $\rho$ on $[0,\infty)$ such that $G^\ast(x)=\int_0^\infty e^{-xt}\,d\rho(t)$ which serves as the generalized Sonine partner of $k$.
\begin{lemma}\label{Sonine}
For a smooth admissable function $\theta$ with respect to $\alpha\in(0,1)$ and $c>1$ let $k(y)=y^{-\alpha}\theta(\log y)$ for $y>0$ and $\rho$ be the Borel measure on $(0,\infty)$ with Laplace transform $\widetilde\rho(x)=G^\ast(x)$ as above. Then $(k,\rho)$ forms a generalized Sonine pair.
\end{lemma}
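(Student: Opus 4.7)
The strategy is pure Laplace transform bookkeeping, built on the two facts already assembled just before the lemma: the Laplace transform of the kernel $k$ equals $G(x) = \widetilde\psi(x)/x$ by \eqref{LTG}, while the Laplace transform of the measure $\rho$ is $G^\ast(x) = 1/\widetilde\psi(x)$ by construction. Multiplying the two gives
\[
\widetilde k(x)\cdot\widetilde\rho(x) = \frac{\widetilde\psi(x)}{x}\cdot\frac{1}{\widetilde\psi(x)} = \frac{1}{x} \quad\text{for all }x>0,
\]
and the right-hand side is exactly the Laplace transform of the constant function $1$ on $(0,\infty)$. So morally the lemma is just the convolution theorem combined with uniqueness of the Laplace transform on measures supported in $[0,\infty)$.

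To turn this into a clean argument I would proceed in three steps. First, I would check that $k\ast\rho$ is a well-defined locally integrable function on $(0,\infty)$: since $\alpha\in(0,1)$ and $\theta$ is bounded (continuous and $\log(c^{1/\alpha})$-periodic), $k(y)=y^{-\alpha}\theta(\log y)$ has an integrable singularity at $0$ and polynomial decay at infinity, and $\rho$ is a locally finite Borel measure with finite Laplace transform on $(0,\infty)$; hence $(k\ast\rho)(t)=\int_{[0,t]}k(t-s)\,d\rho(s)$ is defined and locally integrable. Second, I would invoke the convolution theorem in the measure-function formulation to get that, for every $x>0$,
\[
\int_0^\infty e^{-xt}(k\ast\rho)(t)\,dt = \widetilde k(x)\cdot\widetilde\rho(x) = \frac{1}{x} = \int_0^\infty e^{-xt}\,dt,
\]
the interchange of integrals being justified by Fubini (both sides are absolutely convergent thanks to the positivity of $k$ and $\rho$, and the finiteness of $\widetilde k(x)\widetilde\rho(x)$). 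Third, by uniqueness of the Laplace transform the locally integrable function $k\ast\rho$ must coincide almost everywhere with the constant function $1$, which gives the claimed identity $k\ast\rho\equiv 1$.

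The only real obstacle is the first step, i.e.\ ensuring that the pieces are integrable enough for Fubini and for Laplace inversion to apply. The potentially delicate point is the behaviour of $\rho$ near $0$ and of $k$ near $0$, and one must check that $G^\ast(x)=1/\widetilde\psi(x)$ is genuinely the Laplace transform of a Borel measure on $[0,\infty)$ (which is handled in the paragraph preceding the lemma via Theorem 3.7 in \cite{SSV}) rather than on all of $\mathbb R$. Everything else is formal: self-similarity of $\widetilde\psi$ plays no role in the proof itself, it only guarantees via Lemma \ref{gammaadm} that $k$ has the specific form $y^{-\alpha}\theta(\log y)$, thereby fitting the setup of a generalized Sonine pair rather than a classical one (since $\rho$ need not be absolutely continuous in the log-periodic case).
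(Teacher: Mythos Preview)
Your proposal is correct and follows essentially the same approach as the paper: both compute the Laplace transform of $k\ast\rho$ as the product $\widetilde k(x)\cdot\widetilde\rho(x)=G(x)\cdot G^\ast(x)=\tfrac{\widetilde\psi(x)}{x}\cdot\tfrac{1}{\widetilde\psi(x)}=\tfrac{1}{x}$ and identify this as the Laplace transform of $1_{(0,\infty)}$. Your additional care about well-definedness, Fubini, and Laplace uniqueness simply spells out details that the paper leaves implicit in its two-line proof.
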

\begin{proof}
We easily calculate the Laplace transform of $(k\ast \rho)(t)=\int_0^tk(t-x)\,d\rho(x)$ as
$$\widetilde k(x)\cdot \widetilde\rho(x)=G(x)\cdot G^\ast(x)=\frac{\widetilde\psi(x)}{x}\cdot\frac{1}{\widetilde\psi(x)}=\frac1x$$
for $x>0$, which is the Laplace transform of $1_{(0,\infty)}$.
\end{proof}
By virtue of Lemma \ref{Sonine} we may interpret 
$$\mathbb I_{(k)}f(x)=\int_0^x f(x-y)\,d\rho(y)=(f\ast\rho)(x)$$
as a {\it semi-fractional integral} of order $\alpha$ for locally bounded and measurable functions $f$. Since $\mathbb I_{(k)}f(0)=0$, we get
\begin{align*}
\mathbb D_{(k)}\mathbb I_{(k)}f(x) & =\frac{d}{dx}\int_0^x\mathbb I_{(k)}f(x-y)\,k(y)\,dy=\frac{d}{dx}(\mathbb I_{(k)}f\ast k)(x)\\
& =\frac{d}{dx}((f\ast\rho)\ast k)(x)=\frac{d}{dx}((k\ast\rho)\ast f)(x)=\frac{d}{dx}(1_{(0,\infty)}\ast f)(x)\\
& =\frac{d}{dx}\int_0^xf(t)\,dt=f(x)
\end{align*}
and for absolutely continuous functions $f$ with density $f'$ we get
\begin{align*}
\mathbb I_{(k)}\mathbb D_{(k)}f(x) & =\int_0^x\mathbb D_{(k)}f(x-y))\,d\rho(y)=\int_0^x (k\ast f')(x-y)\,d\rho(y)\\
& =((k\ast f')\ast \rho)(x)=((k\ast\rho)\ast f')(x)=(1_{(0,\infty)}\ast f')(x)\\
& =\int_0^xf'(t)\,dt=f(x)-f(0).
\end{align*}
The semi-fractional integral is also determined by a self-similar Bernstein function.
\begin{lemma}
The primitive $G_I^\ast(x):=\int_0^xG^\ast(y)\,dy=\int_0^x\widetilde\rho(y)\,dy$ is a self-similar Bernstein function with respect to $1-\alpha\in(0,1)$ and $d:=c^{\frac{1-\alpha}{\alpha}}>1$.
\end{lemma}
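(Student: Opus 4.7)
The plan is to verify directly that $G_I^{\ast}$ is Bernstein and then check the self-similarity relation by a change of variables.

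First I would argue that $G_I^{\ast}$ is a Bernstein function. By definition $(G_I^{\ast})'(x) = G^{\ast}(x) = 1/\widetilde\psi(x)$, which has already been shown in the excerpt to be completely monotone (as the composition of the Bernstein function $\widetilde\psi$ with the completely monotone function $x \mapsto 1/x$, via Theorem 3.7 in \cite{SSV}). Moreover $G_I^{\ast}(0) = 0$ and $G^{\ast} > 0$, so $G_I^{\ast}$ is non-negative. Since the characterization \eqref{Bern1} is equivalent to saying that the first derivative is completely monotone on a non-negative function, this already gives that $G_I^{\ast}$ is a Bernstein function. Equivalently, one may observe that $G_I^{\ast}$ is the potential of the measure $\rho$ in the sense $G_I^{\ast}(x) = \int_0^\infty (1 - e^{-xt}) \, t^{-1} \, d\rho(t)$ only after checking integrability, so the cleaner route is via the derivative criterion.

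Next I would check discrete scale invariance. Using the self-similarity of $\widetilde\psi$ from Lemma \ref{gammaadm}, namely $\widetilde\psi(c^{1/\alpha} u) = c \cdot \widetilde\psi(u)$, and the substitution $y = c^{1/\alpha} u$, I would compute
\begin{align*}
G_I^{\ast}(c^{1/\alpha} x) &= \int_0^{c^{1/\alpha} x} \frac{1}{\widetilde\psi(y)} \, dy = \int_0^x \frac{c^{1/\alpha}}{\widetilde\psi(c^{1/\alpha} u)} \, du \\
&= \int_0^x \frac{c^{1/\alpha}}{c \, \widetilde\psi(u)} \, du = c^{(1-\alpha)/\alpha} \, G_I^{\ast}(x) = d \cdot G_I^{\ast}(x).
\end{align*}
Since $d^{1/(1-\alpha)} = c^{1/\alpha}$, this is exactly the self-similarity relation of Definition 2.1 with parameters $1-\alpha \in (0,1)$ and $d > 1$ (note $d = c^{(1-\alpha)/\alpha} > 1$ follows from $c > 1$ and $\alpha \in (0,1)$).

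There is essentially no obstacle: both claims follow from one differentiation and one substitution. The only thing to be careful about is matching the exponents correctly, i.e., verifying that the scaling factor $c^{1/\alpha}$ inherited from $\widetilde\psi$ is indeed the correct scaling factor $d^{1/(1-\alpha)}$ for a Bernstein function self-similar with respect to $(1-\alpha, d)$, and this is precisely what fixes the definition $d = c^{(1-\alpha)/\alpha}$.
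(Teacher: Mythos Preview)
Your proof is correct and takes a genuinely more elementary route than the paper. You establish the Bernstein property directly from the derivative criterion (the primitive of a completely monotone function is Bernstein, given non-negativity), and you verify self-similarity by a single change of variables $y=c^{1/\alpha}u$ inside the defining integral $\int_0^x 1/\widetilde\psi(y)\,dy$. The paper instead works through the L\'evy--Khintchine side: it first transfers the scaling of $\widetilde\psi$ to the scaling $(c^{1/\alpha}\rho)=c^{-1}\rho$ of the representing measure, then uses Fubini to write $G_I^\ast(x)=\int_0^\infty(1-e^{-xt})\,d\mu(t)$ with $d\mu(t)=t^{-1}\,d\rho(t)$, and finally checks self-similarity via this integral representation. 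Your argument is shorter and avoids any measure-theoretic manipulation; the paper's argument, however, explicitly identifies the L\'evy measure $\mu$ of $G_I^\ast$, which is exactly what is needed in the Remark immediately following the lemma to construct the Sonine partner $k^\ast$ from the tail $\mu(t,\infty)=t^{\alpha-1}\sigma(\log t)$. So your approach buys simplicity, while the paper's buys the structural information used downstream; of course one could recover $\mu$ afterwards by invoking Lemma~\ref{gammaadm} once self-similarity is established. One minor point you leave implicit is that $G^\ast(y)\asymp y^{-\alpha}$ near the origin is integrable since $\alpha<1$, so $G_I^\ast$ is indeed finite; this is worth a half-line.
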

\begin{proof}
From Lemma \ref{gammaadm} we get the scaling relation
$$G^\ast(c^{1/\alpha}x)=\frac1{\widetilde\psi(c^{1/\alpha}x)}=\frac1{c\cdot\widetilde\psi(x)}=c^{-1}G^\ast(x)\quad \text{ for all }x>0$$
which implies $(c^{1/\alpha}\rho)=c^{-1}\cdot\rho$ for the measure $\rho$ in Lemma \ref{Sonine}. In particular it follows that $\rho(\{0\})=0$ and by the Fubini-Tonelli theorem we get
\begin{align*}
G_I^\ast(x)= & \int_0^xG^\ast(y)\,dy=\int_0^x\int_{0+}^\infty e^{-yt}\,d\rho(t)\,dy\\
= & \int_{0+}^\infty \frac{1-e^{-xt}}{t}\,d\rho(t)=\int_{0}^\infty (1-e^{-xt})\,d\mu(t),
\end{align*}
where the Borel measure $\mu$ on $(0,\infty)$ is given by $d\mu(t):=\frac1t d\rho(t)$ and integrates $\min\{1,t\}$ as shown in the proof of Theorem 3.2 in \cite{SSV}; cf.\ Proposition 3.5 in \cite{SSV}. Thus the primitive $G_I^\ast(x)$ is a Bernstein function and the scaling relation gives us
\begin{align*}
G_I^\ast(c^{1/\alpha}x):= & \int_{0}^\infty \left(1-e^{-xc^{1/\alpha}t}\right)\,d\mu(t)=c^{1/\alpha}\int_{0+}^\infty \frac{1-e^{-xc^{1/\alpha}t}}{c^{1/\alpha}\,t}\,d\rho(t)\\
= & c^{1/\alpha}\int_{0+}^\infty \frac{1-e^{-xt}}{t}\,d(c^{1/\alpha}\rho)(t)= c^{\frac1{\alpha}-1}\int_{0}^\infty (1-e^{-xt})\,d\mu(t)=c^{\frac{1-\alpha}{\alpha}}G_I^\ast(x).
\end{align*}
Now let $d=c^{\frac{1-\alpha}{\alpha}}>1$ then $d^{\frac1{1-\alpha}}=c^{1/\alpha}$ and we have $G_I^\ast(d^{\frac1{1-\alpha}}x)=d\cdot G_I^\ast(x)$ for all $x>0$. Hence $G_I^\ast$ is a self-similar Bernstein function with respect to $1-\alpha\in(0,1)$ and $d>1$. 
\end{proof}
\begin{remark}
By Lemma \ref{gammaadm} the L\'evy measure $\mu$ corresponding to $G_I^\ast$ is given by $\mu(t,\infty)=t^{\alpha-1}\sigma(\log t)$ for an admissable function $\sigma$ with respect to the parameters $1-\alpha\in(0,1)$ and $d>1$. If this admissable function $\sigma$ is smooth, we get a Sonine pair $(k,k^\ast)$ in the original sense as follows. Integration by parts yields
$$G_I^\ast(x)=x\int_{0}^\infty \tfrac1x(1-e^{-xt})\,d\mu(t)=x\int_{0}^\infty e^{-xt}t^{\alpha-1}\sigma(\log t)\,dt$$
and hence we get as the derivative
\begin{align*}
G^\ast(x) & =\int_{0}^\infty e^{-xt}t^{\alpha-1}\sigma(\log t)\,dt-x\int_{0}^\infty e^{-xt}t^{\alpha}\sigma(\log t)\,dt\\
& =\int_{0}^\infty e^{-xt}\left(t^{\alpha-1}\sigma(\log t)-\tfrac{d}{dt}\left(t^{\alpha}\sigma(\log t)\right)\right)\,dt\\
& =\int_{0}^\infty e^{-xt}\,t^{\alpha-1}\left((1-\alpha)\sigma(\log t)-\sigma'(\log t)\right)\,dt\\
& =:\int_{0}^\infty e^{-xt}\,k^\ast(t)\,dt,
\end{align*}
where the kernel $k^\ast$ is non-negative by Lemma A.1. To show that $(k,k^\ast)$ is a Sonine pair simply calculate the Laplace transform as in the proof of Lemma \ref{Sonine}. Note that in general we cannot expect $k^\ast$ to be completely monotone as in the approach of \cite{Koc} with complete Bernstein functions. Nor can we expect that the admissible function $\sigma$ is smooth in general.
\end{remark}

\section{Discrete approximation of the generator}

Recall that by \eqref{semifd} the semi-fractional derivative operator of order $\alpha\in(0,1)$ is given by the negative generator of the continuous convolution semigroup $(\nu^{\ast t})_{t\geq0}$, where $\nu$ is the semistable distribution with log-characteristic function \eqref{logchar+}. If the tail of the L\'evy measure is given by $\phi(t,\infty)=\Gamma(1-\alpha)\,t^{-\alpha}$, i.e.\ the admissable function $\theta\equiv\Gamma(1-\alpha)$ is constant, it is well known that the semi-fractional derivative coincides with the ordinary Riemann-Liouvile fractional derivative of order $\alpha\in(0,1)$ and can be approximated by means of the Gr\"unwald-Letnikov formula
\begin{equation}\label{GL}
\frac{\partial^{\alpha}}{\partial x^{\alpha}}\,f(x)=\lim_{h\downarrow0}h^{-\alpha}\sum_{j=0}^{\infty}{\alpha\choose j}(-1)^{j}f(x-jh)
\end{equation}
for functions $f$ in the Sobolev space $W^{2,1}(\rr)$; see section 2.1 in \cite{MMMSik} for details. The coefficients in the Gr\"unwald-Letnikov approximation formula appear in a discrete distribution on the positive integers called {\it Sibuya distribution} which first appeared in \cite{Sib}.  A discrete random variable $X_{\alpha}$ on $\nat$ is Sibuya distributed with parameter $\alpha\in(0,1)$ if
\begin{equation}\label{sibuya}
\mathbb P(X_{\alpha}=j)=(-1)^{j-1}{\alpha\choose j}=(-1)^{j-1}\frac{\alpha(\alpha-1)\cdots(\alpha-j+1)}{j!}\quad\text{ for }j\in\nat.
\end{equation}
For further details and extensions of the Sibuya distribution we refer to \cite{Bou,ChrSch,KozPod} and the literature mentioned therein. Using \eqref{sibuya} we may rewrite \eqref{GL} as
\begin{align*}
\frac{\partial^{\alpha}}{\partial x^{\alpha}}\,f(x) & =\lim_{h\downarrow0}h^{-\alpha}\sum_{j=0}^{\infty}{\alpha\choose j}(-1)^{j}f(x-jh)\\
& =\lim_{h\downarrow0}h^{-\alpha}\left(f(x)-\sum_{j=1}^{\infty}\mathbb P(X_{\alpha}=j)f(x-jh)\right)\\
& =\lim_{h\downarrow0}h^{-\alpha}\left(f(x)-\int_{\rr}f(x-hy)\,d\mathbb P_{X_{\alpha}}(y)\right)\\
& =\lim_{h\downarrow0}h^{-\alpha}\left(f\ast\left(\varepsilon_{0}-\mathbb P_{hX_{\alpha}}\right)\right)(x)
\end{align*}
which shows that the Gr\"unwald-Letnikov formula is in fact a discrete approximation of the generator. For further relations of the Sibuya distribution to fractional diffusion equations see \cite{NHA,PPR}.

Our aim is to generalize this formula for semi-fractional derivatives by means of the corresponding self-similar Bernstein functions. Note that for the Bernstein function $\widetilde\psi(x)=x^{\alpha}$ the nominator on the right-hand side of \eqref{sibuya} is given by $\widetilde\psi^{(j)}(1)$ and hence we may define a {\it semi-fractional Sibuya distribution} in the following way.
\begin{defi}
Given an admissable function $\theta$ with respect to $\alpha\in(0,1)$ and $c>1$, a semi-fractional Sibuya distributed random variable $X_{\theta}$ on $\nat$ is given by
\begin{equation}\label{semisib}
\mathbb P(X_{\theta}=j)=\frac{(-1)^{j-1}}{j!}\,\frac{\widetilde\psi^{(j)}(1)}{\widetilde\psi(1)}\quad\text{ for }j\in\nat,
\end{equation}
where $\widetilde\psi$ is the corresponding self-similar Bernstein function.
\end{defi}
Clearly, the expression in \eqref{semisib} is non-negative by \eqref{Bern1}. Moreover, by monotone convergence and a Taylor series approach justified by Proposition 3.6 in \cite{SSV} we get
\begin{align*}
\frac{(-1)^{j-1}}{j!}\,\frac{\widetilde\psi^{(j)}(1)}{\widetilde\psi(1)} & =\frac1{\widetilde\psi(1)}\sum_{j=1}^{\infty}\frac{\widetilde\psi^{(j)}(1)}{j!}\lim_{\varepsilon\downarrow0}(\varepsilon-1)^{j-1}\\
& =\frac1{\widetilde\psi(1)}\lim_{\varepsilon\downarrow0}\frac1{\varepsilon-1}\left(\sum_{j=0}^{\infty}\frac{\widetilde\psi^{(j)}(1)}{j!}(\varepsilon-1)^{j}-\widetilde\psi(1)\right)\\
& =\frac1{\widetilde\psi(1)}\lim_{\varepsilon\downarrow0}\frac1{\varepsilon-1}\left(\widetilde\psi(\varepsilon)-\widetilde\psi(1)\right)=\frac{\widetilde\psi(1)-\widetilde\psi(0)}{\widetilde\psi(1)}=1.
\end{align*}
This shows that indeed \eqref{semisib} defines a proper distribution on $\nat$ with pgf
\begin{align*}
G(z) & =\sum_{j=1}^{\infty}\mathbb P(X_{\theta}=j)\, z^{j} = -\,\frac1{\widetilde\psi(1)}\sum_{j=1}^{\infty}\frac{\widetilde\psi^{(j)}(1)}{j!}\,(-z)^{j}\\
& =1-\frac1{\widetilde\psi(1)}\sum_{j=0}^{\infty}\frac{\widetilde\psi^{(j)}(1)}{j!}\,((1-z)-1)^{j}=1-\frac{\widetilde\psi(1-z)}{\widetilde\psi(1)}
\end{align*}
for $|z|\leq1$. Note that in the above arguments self-similarity of the Bernstein function is not needed. Hence \eqref{semisib} defines a proper distribution on $\nat$ for every Bernstein function $\widetilde\psi$ with $\widetilde\psi(0)=0$.

Now let $\theta$ be a smooth admissable function having Fourier series representation
$$\theta(x)=\sum_{k=-\infty}^{\infty}c_{k} e^{ik\tilde c x}\quad\text{ with }\quad\tilde c=\tfrac{2\pi\alpha}{\log c}.$$
Then by Theorem 3.1 in \cite{KLM} the corresponding self-similar Bernstein function can be expressed in terms of the log-characteristic function $\psi$ as
$$\widetilde\psi(x)=-\psi(ix)=\sum_{k=-\infty}^{\infty}\omega_{k} x^{\alpha-ik\tilde c}\quad\text{ with }\quad\omega_{k}=c_{k}\Gamma(ik\tilde c-\alpha+1)$$
and hece for $j\in\nat_{0}$ we have
$$\frac{\widetilde\psi^{(j)}(1)}{j!}=\sum_{k=-\infty}^{\infty}\omega_{k}{\alpha-ik\tilde c\choose j}.$$
Note that these coefficients appear in a Gr\"unwald-Letnikov type approximation of the semi-fractional derivative given in Theorem 4.1 of \cite{KLM} which enables us to prove the following approximation formula.
\begin{theorem}
Let $\theta$ be a smooth admissable function with respect to $\alpha\in(0,1)$ and $c>1$ with corresponding self-similar Bernstein function $\widetilde\psi$ and semi-fractional Sibuya distributed random variable $X_{\theta}$ given by \eqref{semisib}. Then for $f\in W^{2,1}(\rr)$ the semi-fractional derivative can be approximated along the subsequence $h_{m}=c^{-m/\alpha}$ by
$$\frac{\partial^{\alpha}}{\partial_{c,\theta} x^{\alpha}}\,f(x)=\lim_{m\to\infty}\widetilde\psi(h_{m}^{-1})\left(f\ast\left(\varepsilon_{0}-\mathbb P_{h_{m }X_{\theta}}\right)\right)(x).$$
\end{theorem}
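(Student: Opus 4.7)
The strategy is to reduce the assertion to the Gr\"unwald--Letnikov type approximation formula of Theorem 4.1 in \cite{KLM} by expanding the Sibuya probabilities via the Fourier series of $\widetilde\psi$ and then exploiting the self-similarity at the sampling points $h_m=c^{-m/\alpha}$.

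First I would unfold the convolution. Writing
$$\bigl(f\ast(\varepsilon_{0}-\mathbb P_{h_{m}X_{\theta}})\bigr)(x)=f(x)-\sum_{j=1}^{\infty}\mathbb P(X_{\theta}=j)\,f(x-h_{m}j),$$
substituting the definition \eqref{semisib} of the semi-fractional Sibuya probabilities, and using that
$$\widetilde\psi(h_{m}^{-1})=\widetilde\psi(c^{m/\alpha})=c^{m}\widetilde\psi(1)$$
by self-similarity, the expression whose limit we want reduces to
$$A_{m}(x):=c^{m}\sum_{j=0}^{\infty}\frac{(-1)^{j}\widetilde\psi^{(j)}(1)}{j!}\,f(x-h_{m}j),$$
after absorbing the leading term $\widetilde\psi(1)f(x)$ into the $j=0$ summand.

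Next I would insert the Fourier series representation. From the formula
$$\frac{\widetilde\psi^{(j)}(1)}{j!}=\sum_{k\in\ganz}\omega_{k}\binom{\alpha-ik\tilde c}{j},\qquad \omega_{k}=c_{k}\Gamma(ik\tilde c-\alpha+1),$$
recalled in the paragraph just before the theorem, swapping the order of summation gives
$$A_{m}(x)=\sum_{k\in\ganz}\omega_{k}\,c^{m}\sum_{j=0}^{\infty}(-1)^{j}\binom{\alpha-ik\tilde c}{j}f(x-h_{m}j).$$
The decisive observation is that along the subsequence $h_{m}=c^{-m/\alpha}$ one has, because $\tilde c=2\pi\alpha/\log c$,
$$h_{m}^{-\alpha+ik\tilde c}=c^{m}\cdot c^{-imk\tilde c/\alpha}=c^{m}\cdot e^{-2\pi ikm}=c^{m},$$
so the $k$-th inner sum is precisely the classical complex-order Gr\"unwald--Letnikov difference of step $h_{m}$ and order $\alpha-ik\tilde c$. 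Hence
$$A_{m}(x)=\sum_{k\in\ganz}\omega_{k}\,h_{m}^{-\alpha+ik\tilde c}\sum_{j=0}^{\infty}(-1)^{j}\binom{\alpha-ik\tilde c}{j}f(x-h_{m}j),$$
which is exactly the discretization whose convergence to $\tfrac{\partial^{\alpha}}{\partial_{c,\theta}x^{\alpha}}f(x)$ is established in Theorem~4.1 of \cite{KLM}. Invoking that result concludes the proof.

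The principal technical obstacle is the legitimacy of the double swap of summation, i.e.\ interchanging $\sum_{k}$ with $\sum_{j}$ and with the limit $m\to\infty$. This is where the smoothness hypothesis on $\theta$ enters decisively: smoothness gives fast decay of the Fourier coefficients $c_{k}$, while Stirling's formula controls the growth of $|\Gamma(ik\tilde c-\alpha+1)|$, so that $\sum_{k}|\omega_{k}|$ and the tail estimates needed for the $j$-sums are summable uniformly in $m$. Combined with the $W^{2,1}(\rr)$ regularity of $f$, which underpins the convergence of each complex-order Gr\"unwald--Letnikov series, a dominated convergence argument justifies all interchanges and yields the stated limit.
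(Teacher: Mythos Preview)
Your proof is correct and follows essentially the same route as the paper: unfold the convolution, use self-similarity to write $\widetilde\psi(h_m^{-1})=h_m^{-\alpha}\widetilde\psi(1)=c^m\widetilde\psi(1)$, insert the Fourier expansion $\widetilde\psi^{(j)}(1)/j!=\sum_k\omega_k\binom{\alpha-ik\tilde c}{j}$, exploit $h_m^{ik\tilde c}=1$, and recognize the result as the Gr\"unwald--Letnikov expression whose convergence is Theorem~4.1 of \cite{KLM}. The paper's version is terser and does not pause over the interchange of sums; one small slip in your final paragraph is that $|\Gamma(ik\tilde c-\alpha+1)|$ actually \emph{decays} exponentially in $|k|$ (so it helps rather than hurts summability of $\omega_k$), but this only strengthens your justification.
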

\begin{proof}
First note that $h_{m}^{ik\tilde c}=c^{-imk\tilde c/\alpha}=e^{-2\pi imk}=1$ and by self-similarity we have $\widetilde\psi(h_{m}^{-1})=h_{m}^{-\alpha}\widetilde\psi(1)$ for all $m\in\nat$. Hence we get
\begin{align*}
& \widetilde\psi(h_{m}^{-1})\left(f\ast\left(\varepsilon_{0}-\mathbb P_{h_{m }X_{\theta}}\right)\right)(x) =h_{m}^{-\alpha}\widetilde\psi(1)\left(f(x)-\int_{\rr}f(x-h_{m}y)\,d\mathbb P_{X_{\theta}}(y)\right)\\
& \quad =h_{m}^{-\alpha}\widetilde\psi(1)\left(f(x)-\sum_{j=1}^{\infty}\frac{(-1)^{j-1}}{j!}\,\frac{\widetilde\psi^{(j)}(1)}{\widetilde\psi(1)}\,f(x-jh_{m})\right)\\
& \quad =h_{m}^{-\alpha}\left(\sum_{k=-\infty}^{\infty}\omega_{k}f(x)+\sum_{j=1}^{\infty}(-1)^{j}\sum_{k=-\infty}^{\infty}\omega_{k}{\alpha-ik\tilde c\choose j}f(x-jh_{m})\right)\\
& \quad =h_{m}^{-\alpha}\sum_{j=0}^{\infty}(-1)^{j}\sum_{k=-\infty}^{\infty}\omega_{k}h_{m}^{ik\tilde c} {\alpha-ik\tilde c\choose j}f(x-jh_{m})\to\frac{\partial^{\alpha}}{\partial_{c,\theta} x^{\alpha}}\,f(x),
\end{align*}
where the last convergence follows from Theorem 4.1 in \cite{KLM}.
\end{proof}

\section{Space-time duality for semi-fractional diffusions}

We first give a sufficient condition for an inverse function to be a Bernstein function.
\begin{lemma}\label{invBernstein}
Let $f:(0,\infty)\to(0,\infty)$ be a $C^\infty(0,\infty)$-function such that $f'$ is a Bernstein function and $f^{(n)}(x)\not=0$ for all $x>0$ and $n\in\nat$. Then its inverse $f^{-1}$ is a Bernstein function with $(f^{-1})^{(n)}(x)\not=0$ for all $x>0$ and $n\in\nat$. 
\end{lemma}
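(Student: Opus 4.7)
The aim is to establish the strict sign pattern $(-1)^{n-1}(f^{-1})^{(n)}(y) > 0$ for all $y > 0$ and $n\in\nat$, by induction based on Fa\`a di Bruno's formula applied to the identity $f(f^{-1}(y)) = y$. Writing $g := f^{-1}$, existence and smoothness of $g$ on the range of $f$ follow from the inverse function theorem, since $f'>0$ forces $f$ to be a strictly increasing $C^\infty$-diffeomorphism onto its image. The hypothesis that $f'$ is a Bernstein function together with $f^{(n)}(x)\neq 0$ translates directly into $f'(x) > 0$ and $\operatorname{sign}(f^{(k)}(x)) = (-1)^k$ with strict inequality for every $k\geq 2$.

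For $n \geq 2$, differentiating $f\circ g = \operatorname{id}$ exactly $n$ times via Fa\`a di Bruno gives
\begin{equation*}
0 = \sum_{\sum jm_j = n} \frac{n!}{m_1!\cdots m_n!}\, f^{(K)}(g(y)) \prod_{j=1}^{n} \left(\frac{g^{(j)}(y)}{j!}\right)^{m_j},
\end{equation*}
where $K = m_1+\cdots+m_n$. The unique summand with $m_n=1$ and all other $m_j=0$ equals $f'(g(y))\,g^{(n)}(y)$; isolating it and dividing by $f'(g(y))>0$ yields
\begin{equation*}
g^{(n)}(y) = -\frac{1}{f'(g(y))} \sum_{\substack{\sum jm_j = n \\ m_n=0}} \frac{n!}{m_1!\cdots m_{n-1}!}\, f^{(K)}(g(y)) \prod_{j=1}^{n-1} \left(\frac{g^{(j)}(y)}{j!}\right)^{m_j},
\end{equation*}
where $K$ ranges in $\{2,\dots,n\}$ (the constraint $m_n=0$ together with $\sum jm_j = n\geq 2$ rules out $K=1$).

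The inductive base is $g'(y)=1/f'(g(y))>0$. Assuming $\operatorname{sign}(g^{(j)}(y)) = (-1)^{j-1}$ with strict inequality for $1\leq j\leq n-1$, in each admissible summand the product $\prod_{j=1}^{n-1}(g^{(j)})^{m_j}$ carries sign $(-1)^{\sum(j-1)m_j} = (-1)^{n-K}$ by the relation $\sum jm_j - \sum m_j = n-K$, while $f^{(K)}(g(y))$ carries sign $(-1)^{K}$; every summand therefore has sign $(-1)^n$. The leading minus sign and $f'(g(y))>0$ then give $\operatorname{sign}(g^{(n)}(y)) = (-1)^{n-1}$ with strict inequality, so the induction closes and simultaneously shows that $g$ is a Bernstein function whose derivatives never vanish.

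The main obstacle is the combinatorial sign bookkeeping in Fa\`a di Bruno: one must check that the excluded term $m_n=1$ is genuinely the only one contributing $g^{(n)}$, and verify the telescoping identity $\sum_{j=1}^{n-1}(j-1)m_j = n-K$ uniformly over the multi-indices. Once these are in place, no completely monotone composition theorems or integral representations are required, and the conclusion follows from a pure sign-counting induction.
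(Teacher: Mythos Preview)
Your proof is correct and follows essentially the same route as the paper: the paper isolates the Fa\`a di Bruno expansion of $(f^{-1})^{(n)}$ as a separate Lemma~A.2 and then runs exactly your sign-counting induction, obtaining $\operatorname{sign}\big(f^{(K)}(f^{-1}(x))\big)=(-1)^{K}$ from the Bernstein property of $f'$ and $\operatorname{sign}\big(\prod_j(g^{(j)})^{m_j}\big)=(-1)^{n-K}$ from the induction hypothesis, so that each summand has sign $(-1)^n$. The only cosmetic difference is that the paper treats $n=2$ by hand before invoking the general formula for $n\geq3$, whereas you absorb it into the same induction step.
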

\begin{proof}
We know that $f^{-1}(x)>0$ and $(f^{-1})'(x)=\frac{1}{f'(f^{-1}(x))}>0$ for all $x>0$. Moreover, as in Remark A.3 of the Appendix $(f^{-1})''(x)=-\,\frac{f''(f^{-1}(x))}{(f'(f^{-1}(x)))^3}<0$ for all $x>0$. For $n\geq3$ we inductively use the formula for $(f^{-1})^{(n)}$ given in Lemma A.2 of the Appendix. By induction we have
$$\sign\left(\prod_{j=1}^{n-1}\left(\frac{(f^{-1})^{(j)}(x)}{j!}\right)^{k_j}\right)=\prod_{j=1}^{n-1}(-1)^{(j-1)k_j}=(-1)^{\sum_{j=1}^{n-1}(j-1)k_j}$$
and $\sign(f^{(k_1+\cdots+k_{n-1})}(f^{-1}(x)))=(-1)^{k_1+\cdots+k_{n-1}}$ since $k_1+\cdots+k_{n-1}\geq 2$ for $n\geq3$ as $k_1+2k_2\cdots+(n-1)k_{n-1}=n$. This shows that
$$\sign\left(f^{(k_1+\cdots+k_{n-1})}(f^{-1}(x))\prod_{j=1}^{n-1}\left(\frac{(f^{-1})^{(j)}(x)}{j!}\right)^{k_j}\right)=(-1)^{\sum_{j=1}^{n-1}jk_j}=(-1)^n$$
for every summand of the formula in Lemma A.2. Thus $\sign((f^{-1})^{(n)}(x))=(-1)^{n-1}$ for all $x>0$ and $n\geq3$ showing that $f^{-1}$ is a Bernstein function.
\end{proof}
We want to apply Lemma \ref{invBernstein} to the function
\begin{equation}\label{fctf}
\zeta(x)=\psi(-ix)=\int_{-\infty}^0\left(e^{xy}-1-xy\right)\,d\phi(y)>0\quad\text{ for }x>0,
\end{equation}
where $\phi$ is the semistable L\'evy measure for $\alpha\in(1,2)$ from \eqref{tailLm-} concentrated on the negative axis and $\psi$ is the corresponding log-characteristic function from \eqref{logchar-}. Note that $\zeta(x)\to\infty$ as $x\to\infty$ due to the tail behavior in \eqref{tailLm-} and by dominated convergence we have
$$\zeta'(x)=\int_{-\infty}^0y\left(e^{xy}-1\right)\,d\phi(y)=\int_0^\infty\left(1-e^{-xy}\right)y\,d\phi(-y).$$
Since $\phi$ integrates $\min\{1,y^2\}$ and fulfills \eqref{tailLm-}, the measure $\mu$ with $d\mu(y)=y\,d\phi(-y)$ integrates $\min\{1,y\}$ and thus $\zeta'$ is a Bernstein function. This also follows from the higher order derivatives
$$\zeta^{(n)}(x)=\int_{-\infty}^0y^ne^{xy}\,d\phi(y)\begin{cases} >0 & \text{ if $n\geq2$ is even,}\\ <0 & \text{ if $n\geq3$ is odd,}\end{cases}$$
which additionally shows that these derivatives do not vanish. Thus from Lemma \ref{invBernstein} we conclude that the inverse $\zeta^{-1}$ is a Bernstein function. This will be the key to solve an open problem concerning the following result on space-time duality from \cite{KL}.

According to \cite[Example 28.2]{Sato}, the semistable L\'evy process $(X_t)_{t\geq0}$ with $P\{X_t\in A\}=\nu^{\ast t}(A)$ for Borel sets $A\in\mathcal B(\rr)$, where $\nu$ is the semistable distribution with log-characteristic function $\psi$ from \eqref{logchar-}, possesses $C^\infty(\rr)$-densities $x\mapsto p(x,t)$ for every $t>0$ with $P\{X_t\in A\}=\int_Ap(x,t)\,dx$ and for $t=0$ we may write $p(x,0)=\delta(x)$ corresponding to $X_{0}=0$ almost surely. It is shown in \cite{KLM} that these densities are the point source solution to the semi-fractional diffusion equation
\begin{equation}\label{sfdeq}
\frac{\partial^\alpha}{\partial_{c,\theta}(-x)^\alpha}\,p(x,t)=\frac{\partial}{\partial t}\,p(x,t)
\end{equation}
with the negative semi-fractional derivative of order $\alpha$ from \eqref{negsemifd} acting on the space variable. Since the semi-fractional derivative is a non-local operator, this equation is hard to interpret from a physical point of view, whereas non-locality in time may correspond to long memory effects \cite{Hil}. As a generalization of a space-time duality result for fractional diffusions \cite{BMN, KelMMM} based on a corresponding result for stable densities by Zolotarev \cite{Zol, Zolbook} it was shown in \cite{KL} that space-time duality may also hold for semi-fractional diffusions. Theorem 3.3 in \cite{KL} states that for $x>0$ and $t>0$ we have $p(x,t)=\alpha^{-1}h(x,t)$, where $h(x,t)$ is the point source solution to the semi-fractional differential equation
\begin{equation}\label{sfdeqtime}
\frac{\partial^{1/\alpha}}{\partial_{d,\tau}t^{1/\alpha}}\,h(x,t)+\frac{\partial}{\partial x}\,h(x,t)=t^{-1/\alpha}\varrho(\log t)\,\delta(x)
\end{equation}
with a semi-fractional derivative of order $1/\alpha$ acting on the time variable, provided that $\tau$ and $\varrho$ are admissable functions with respect to $1/\alpha\in(\frac12,1)$ and $d=c^{1/\alpha}>1$ which remains an open problem in \cite{KL}. We will now show that indeed $\tau$ is admissable and $\varrho(x)=-\alpha\tau'(x)$, provided that $\tau$ is smooth. Note that in general $\varrho$ will not be admissable as conjectured in \cite{KL} but we will justify the inhomogenity in \eqref{sfdeqtime} by different arguments. In \cite{KL} the function $\tau$ appears in the following way. The above inverse $\zeta^{-1}$ is called $\xi$ in \cite{KL} and its existence is shown in Lemma 4.1 of \cite{KL}. It was further shown in Lemma 4.2 of \cite{KL} that $\xi(t)=t^{1/\alpha}g(\log t)$ for a continuously differentiable and $\log(c)$-periodic function $g$. Since we now know that $\xi=\zeta^{-1}$ is a Bernstein function, in fact $g$ is a $C^\infty(\rr)$-function. Since $g$ is a smooth $\log(c)$-periodic function, it is representable by its Fourier series
\begin{equation}\label{FSg}
g(x)=\sum_{n\in\ganz}d_ne^{-in\tilde dx}\quad\text{ with }\tilde d=\frac{2\pi}{\log c}=\frac{2\pi\frac1{\alpha}}{\log d}\text{ for }d=c^{1/\alpha},
\end{equation}
where by Lemma 1 in \S12 of \cite{Arn} we have $|d_{n}|\leq C\cdot e^{-\frac{\pi}{2}\,|n|\tilde d}$ for some $C>0$ and all $n\in\ganz$. If we require a little more quality, namely that the Fourier coefficients even decay as 
\begin{equation}\label{taucond}
|d_{n}|\leq C\cdot e^{-\frac{\pi}{2}\,|n|\tilde d}|n|^{-\frac32-\frac1{\alpha}-\varepsilon}\quad\text{ for some $\varepsilon>0$ and all $n\in\ganz\setminus\{0\}$},
\end{equation}
then we can define $\tau$ by the Fourier series
\begin{equation}\label{FStau}
\tau(x)=\sum_{n\in\ganz}\frac{d_n}{\Gamma(in\tilde d-\frac1{\alpha}+1)}\,e^{-in\tilde dx}.
\end{equation}
\begin{lemma}\label{tauadm}
If \eqref{taucond} holds, then the function $\tau$ in \eqref{FStau} is well-defined and a smooth admissable function with respect to $1/\alpha\in(\frac12,1)$ and $d=c^{1/\alpha}$. Moreover, $\zeta^{-1}$ is a self-similar Bernstein function with respect to the same parameters.
\end{lemma}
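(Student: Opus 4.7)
The plan is to first verify that the Fourier series \eqref{FStau} defines a smooth function, and then to identify $\tau$ with the admissable function attached to $\zeta^{-1}$ by Lemma \ref{gammaadm}. For the first task I use the classical Stirling asymptotic $|\Gamma(a+ib)| \sim \sqrt{2\pi}\,|b|^{a-1/2} e^{-\pi|b|/2}$ as $|b|\to\infty$, applied with $a=1-1/\alpha$ and $b=n\tilde d$. This yields $|\Gamma(in\tilde d - 1/\alpha + 1)|^{-1} \le C\,|n|^{1/\alpha - 1/2} e^{\pi|n|\tilde d/2}$ for large $|n|$, which combined with \eqref{taucond} shows the $n$-th Fourier coefficient of $\tau$ is bounded by $C'|n|^{-2-\varepsilon}$. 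Hence \eqref{FStau} converges absolutely and uniformly to a continuous function $\tau$; multiplying by $|n|$ to allow one termwise differentiation leaves $|n|^{-1-\varepsilon}$, still summable, so $\tau\in C^1(\rr)$ and in particular smooth in the paper's sense. Periodicity with period $\log c = \log(d^\alpha) = \log(d^{1/(1/\alpha)})$ is immediate from the frequencies $n\tilde d$.

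For the moreover part, $\zeta^{-1}$ is Bernstein by Lemma \ref{invBernstein} applied to $\zeta$ (as already recorded in the text preceding the lemma), while the representation $\xi(t)=t^{1/\alpha}g(\log t)$ together with the $\log(c)$-periodicity of $g$ and the identity $d^\alpha=c$ gives
\[
\zeta^{-1}(d^\alpha t) = (ct)^{1/\alpha}g(\log t + \log c) = d\cdot\zeta^{-1}(t),
\]
which is the self-similarity relation for parameters $1/\alpha$ and $d$. Hence Lemma \ref{gammaadm} supplies an admissable $\tilde\tau$ with respect to the same parameters such that the L\'evy measure $\tilde\mu$ of $\zeta^{-1}$ satisfies $\tilde\mu(t,\infty)=t^{-1/\alpha}\tilde\tau(\log t)$.

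It remains to identify $\tilde\tau=\tau$, which I would do by matching two expansions of $\zeta^{-1}(x)$ on $(0,\infty)$. Directly from $\xi(x)=x^{1/\alpha}g(\log x)$ and \eqref{FSg} one obtains $\zeta^{-1}(x)=\sum_{n\in\ganz} d_n\,x^{1/\alpha - in\tilde d}$. Integrating the L\'evy-Khintchine representation $\zeta^{-1}(x)=\int_0^\infty(1-e^{-xy})\,d\tilde\mu(y)$ by parts, substituting $v=xe^u$, and using the gamma integral $\int_0^\infty e^{-v}v^{in\tilde d - 1/\alpha}\,dv = \Gamma(in\tilde d - 1/\alpha + 1)$ yields $\zeta^{-1}(x)=\sum_{n\in\ganz} \tilde c_n\,\Gamma(in\tilde d - 1/\alpha + 1)\,x^{1/\alpha - in\tilde d}$, where $\tilde c_n$ are the Fourier coefficients of the bounded periodic function $\tilde\tau$. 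Viewing both expansions as Fourier series of the $\log c$-periodic function $u\mapsto e^{-u/\alpha}\zeta^{-1}(e^u)$ and invoking Fourier uniqueness forces $\tilde c_n = d_n/\Gamma(in\tilde d - 1/\alpha + 1)$, which are exactly the Fourier coefficients of $\tau$ from \eqref{FStau}. Thus $\tau=\tilde\tau$ and positivity together with the non-increasing monotonicity property pass from $\tilde\tau$ to $\tau$ automatically, making $\tau$ admissable.

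The main obstacle is justifying the interchange of summation and integration in the second expansion, which I would handle as follows: admissability of $\tilde\tau$ gives boundedness and periodicity, so $\tilde\tau$ has well-defined Fourier coefficients and Fubini applies to the Laplace integral against $\tilde\mu$ because $e^{-xy}$ supplies exponential decay on $[1,\infty)$ and the singularity at $0$ is controlled by the L\'evy condition $\int \min\{1,t\}\,d\tilde\mu(t)<\infty$. A minor bookkeeping step is to reconcile the sign convention in \eqref{FStau}, where the exponent is $-in\tilde d x$, with the $e^{in\tilde d x}$ expansion produced by the matching; this is done via reindexing $n\to -n$ together with the reality constraint $d_{-n}=\overline{d_n}$ that follows from $g$ being real-valued.
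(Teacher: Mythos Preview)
Your proposal is correct and follows essentially the same route as the paper's proof. Both arguments establish self-similarity of $\xi=\zeta^{-1}$ from the representation $\xi(t)=t^{1/\alpha}g(\log t)$, invoke Lemma~\ref{gammaadm} to obtain an admissable function governing the L\'evy tail, and then identify its Fourier coefficients with those in \eqref{FStau} before checking convergence via the Stirling bound on $|\Gamma(in\tilde d-\tfrac1\alpha+1)|^{-1}$ combined with \eqref{taucond}. The only cosmetic difference is that the paper quotes the already-derived relation \eqref{modFseries} between the Fourier coefficients of $\theta$ and $\gamma$, whereas you recompute it by integrating the L\'evy--Khintchine formula by parts and evaluating a gamma integral; and the paper performs the identification first and the convergence check last, while you reverse the order. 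Your handling of the Fubini step and the $n\mapsto -n$ reindexing via $d_{-n}=\overline{d_n}$ is exactly the bookkeeping needed and matches what the paper sweeps into the citation of \eqref{modFseries}.
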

\begin{proof}
As shown above $\xi=\zeta^{-1}$ is a Bernstein function and with $d=c^{1/\alpha}$ and $\xi(t)=t^{1/\alpha}g(\log t)$ for a $\log(c)$-periodic function $g$ we get
$$d\cdot\xi(t)=(ct)^{1/\alpha}g(\log(ct))=\xi(ct)=\xi(d^\alpha t)$$
showing that $\xi$ is a self-similar Bernstein function with respect to $1/\alpha\in(\frac12,1)$ and $d=c^{1/\alpha}$. By Lemma \ref{gammaadm} we have
\begin{equation}\label{xiBf}
\xi(x)=\int_0^\infty\left(1-e^{-xy}\right)\,d\mu(y),
\end{equation}
where $\mu$ is a semistable L\'evy measure with $\mu(t,\infty)=t^{-1/\alpha}\tau(\log t)$ for an admissable function $\tau$  with respect to $1/\alpha\in(\frac12,1)$ and $d=c^{1/\alpha}$. It remains to show that $\tau$ is indeed the function we are looking for. Let us assume for a moment that $\tau$ is smooth and thus admits the Fourier series
$$\tau(x)=\sum_{n\in\ganz}a_n\,e^{-in\tilde dx}\quad\text{ with }\tilde d=\frac{2\pi}{\log c}=\frac{2\pi\frac1{\alpha}}{\log d}.$$
In this case the function $\gamma$ appearing in Lemma \ref{gammaadm} fulfills
$$\gamma(-x)=e^{-\frac1{\alpha}\,x}\xi(e^x)=g(x)$$
and by \eqref{modFseries} has the Fourier series representation
$$g(x)=\gamma(-x)=\sum_{n\in\ganz}a_n\Gamma(in\tilde d-\tfrac1{\alpha}+1)\,e^{-in\tilde dx}.$$
A comparison with \eqref{FSg} and uniqueness of the Fourier coefficients shows that $a_n$ coincides with $d_n/\Gamma(in\tilde d-\tfrac1{\alpha}+1)$ and thus $\tau$ is indeed the function in \eqref{FStau}. Finally, we have to show that the series in \eqref{FStau} converges. Using the asymptotic behavior of the gamma function in Corollary 1.4.4 of \cite{AAR}, the Fourier coefficients fulfill
$$\left|\frac{d_n}{\Gamma(in\tilde d-\frac1{\alpha}+1)}\right|\leq K |d_{n}|\cdot|n|^{-\frac12+\frac1{\alpha}} e^{\frac{\pi}{2}\,|n|\tilde d}$$
for a constant $K>0$ and all $n\in\ganz\setminus\{0\}$. According to our assumption \eqref{taucond} we obtain
$$\left|\frac{d_n}{\Gamma(in\tilde d-\frac1{\alpha}+1)}\right|\leq KC|n|^{-2-\varepsilon}$$
for some $\varepsilon >0$ and all $n\in\ganz\setminus\{0\}$ showing that the series in \eqref{FStau} converges and the resulting function is continuously differentiable by Theorem 2.6 in \cite{Fol}.
\end{proof}
Admissability of $\tau$ in combination with Theorem 3.3 in \cite{KL} finally enables us to completely solve space-time duality for semi-fractional diffusions. The equation \eqref{sfdeqtime} is derived in \cite{KL} by Laplace inversion of the equation
\begin{equation}\label{sfdeqLT}
\xi(s)\widetilde h(x,s)-\frac1s\,\xi(s)\delta(x)+\frac{\partial}{\partial x}\,\widetilde h(x,s)=-\frac1s\,\frac{f(s)}{s+f(s)}\,\xi(s)\delta(x),
\end{equation}
where $f$ is defined in the proof of Theorem 3.1 in \cite{KL} as
\begin{equation}\label{deff}
f(s)=\frac1{\alpha}\,\xi(s)^\alpha m'(\log\xi(s))
\end{equation}
and $m$ is a $\log(c^{1/\alpha})$-periodic function given by 
\begin{equation}\label{defm}
\zeta(s)=x^\alpha m(\log x).
\end{equation}

\begin{theorem}\label{stduality}
Assume that \eqref{taucond} holds, $\tau$ is given as in Lemma \ref{tauadm} and define $\varrho(t)=-\alpha\tau'(t)$. Then the point source solutions $p(x,t)$ of the semi-fractional diffusion equation \eqref{sfdeq} of order $\alpha\in(1,2)$ in space and $h(x,t)$ of the semi-fractional equation \eqref{sfdeqtime} of order $1/\alpha\in(\frac12,1)$ in time are equivalent, i.e.\ $p(x,t)=\alpha^{-1}h(x,t)$ for all $x>0$ and $t>0$.
\end{theorem}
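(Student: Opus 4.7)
The strategy is to combine the admissibility of $\tau$ from Lemma \ref{tauadm} with the inverse-function structure $\zeta\circ\xi=\mathrm{id}$ to reduce the theorem to a single Laplace-transform identity on the source term. Lemma \ref{tauadm} makes $\tau$ admissible with respect to $1/\alpha\in(\tfrac12,1)$ and $d=c^{1/\alpha}$, so the operator $\frac{\partial^{1/\alpha}}{\partial_{d,\tau}t^{1/\alpha}}$ in \eqref{sfdeqtime} is well-defined; under the point-source initial condition $h(x,0^+)=\delta(x)$ its Laplace transform in $t$ equals $\xi(s)\widetilde h(x,s)-s^{-1}\xi(s)\delta(x)$. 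Comparing with the Laplace-domain equation \eqref{sfdeqLT} derived in \cite{KL}, the duality $p(x,t)=\alpha^{-1}h(x,t)$ will follow from Theorem 3.3 of \cite{KL} once
\begin{equation*}
\int_0^\infty e^{-st}t^{-1/\alpha}\varrho(\log t)\,dt=-\frac{\xi(s)\,f(s)}{s(s+f(s))}
\end{equation*}
is verified with $\varrho(t)=-\alpha\tau'(t)$. This is exactly the gap in \cite{KL}, where admissibility of $\varrho$ was assumed but is not generally valid.

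First I would simplify the right-hand side using $\zeta(\xi(s))=\xi(s)^\alpha m(\log\xi(s))=s$. Implicit differentiation in $s$ combined with \eqref{deff} produces the clean identity $s+f(s)=\xi(s)/(\alpha\xi'(s))$, collapsing the target to $\alpha\xi'(s)-\xi(s)/s$. Next, writing $k(t)=t^{-1/\alpha}\tau(\log t)$ for the tail of the L\'evy measure $\mu$ of $\xi$ supplied by Lemma \ref{tauadm}, Fubini applied to \eqref{xiBf} gives $\widetilde k(s)=\xi(s)/s$, and differentiation under the integral sign yields $\xi'(s)=\widetilde k(s)+s\widetilde k'(s)$. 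A short product-rule manipulation of $k'(t)$ provides the pointwise identity
\begin{equation*}
t^{-1/\alpha}\tau'(\log t)=tk'(t)+\tfrac{1}{\alpha}\,k(t),
\end{equation*}
and an integration by parts against $e^{-st}$ then yields $\widetilde\ell(s)=(\tfrac{1}{\alpha}-1)\widetilde k(s)-s\widetilde k'(s)$ for $\ell(t)=t^{-1/\alpha}\tau'(\log t)$. Multiplying by $-\alpha$ gives $(\alpha-1)\widetilde k(s)+\alpha s\widetilde k'(s)$, which equals $\alpha\xi'(s)-\xi(s)/s$ by the formula for $\xi'$. Laplace-inverting \eqref{sfdeqLT} then produces \eqref{sfdeqtime} with the claimed $\varrho$, and the duality assertion follows.

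The main obstacle I anticipate is the rigorous bookkeeping of boundary terms in the integration by parts, since the Sonine-type kernel $k$ has an integrable singularity at the origin and the Laplace inversion is carried out against the distributional source $\delta(x)$. At $t\to 0^+$ the factor $te^{-st}k(t)=t^{1-1/\alpha}e^{-st}\tau(\log t)$ vanishes because $\tau$ is bounded and $1-1/\alpha>0$, while at $t\to\infty$ the exponential factor dominates any polynomial growth. One also has to justify the differentiation under the integral sign defining $\xi'$, which is routine thanks to the semistable decay of $\mu$. Once these technical points are in place, the identification of $\varrho=-\alpha\tau'$ is complete with no appeal to admissibility of $\varrho$, which plugs the gap left open in \cite{KL}.
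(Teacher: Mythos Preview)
Your proposal is correct and follows essentially the same route as the paper: both derive the key identity $s+f(s)=\xi(s)/(\alpha\xi'(s))$ (the paper writes it as $f(s)=\tfrac1\alpha\zeta'(\xi(s))\xi(s)-s$ and then uses $\zeta'(\xi(s))=1/\xi'(s)$), reduce the right-hand side of \eqref{sfdeqLT} to $\alpha\xi'(s)-\xi(s)/s$, and then invert via the tail kernel $k(t)=t^{-1/\alpha}\tau(\log t)$ and an integration by parts with the same boundary-term justification. The only cosmetic difference is that you verify the single Laplace identity for $t^{-1/\alpha}\varrho(\log t)$ directly, whereas the paper inverts $\xi(s)/s$ and $\xi'(s)$ separately and combines the results pointwise.
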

\begin{proof}
As shown in \cite{KL} it remains to carry out Laplace inversion of \eqref{sfdeqLT}. Clearly, $\frac{\partial}{\partial x}\,\widetilde h(x,s)$ is the Laplace transform of $\frac{\partial}{\partial x}\,h(x,t)$. Moreover, the first part on the left-hand side of \eqref{sfdeqLT} is the Laplace transform of the semi-fractional derivative in time $\frac{\partial^{1/\alpha}}{\partial_{d,\tau}t^{1/\alpha}}\,h(x,t)$ as argued in \cite{KL}. We may rewrite the right-hand side of \eqref{sfdeqLT} as follows. From \eqref{defm} we obtain
$$\zeta'(t)=\alpha\,x^{\alpha-1}m(\log x)+x^{\alpha-1}m'(\log x)$$ 
and thus we have by \eqref{defm} and $\zeta(\xi(s))=s$
$$\zeta'(\xi(s))=\alpha\,\frac1{\xi(s)}\,s+\frac1{\xi(s)}\,\xi(s)^{\alpha}m'(\log\xi(s))=\alpha\,\frac1{\xi(s)}\,s+\alpha\,\frac1{\xi(s)}\,f(s),$$
where the last equality follows from \eqref{deff}. This shows that 
$$f(s)=\frac1{\alpha}\,\zeta'(\xi(s))\,\xi(s)-s$$
and the right-hand side of \eqref{sfdeqLT} can be rewritten as 
\begin{equation}\label{CtoRL}
-\frac1s\,\frac{f(s)}{s+f(s)}\,\xi(s)\delta(x)=-\frac{\alpha}{s}\,\frac{f(s)}{\zeta'(\xi(s))}\,\delta(x)=-\frac1s\,\xi(s)\delta(x)+\alpha\,\frac1{\zeta'(\xi(s))}\,\delta(x).
\end{equation}
Note that the first part $-\frac1s\,\xi(s)\delta(x)$ also appears on the left-hand side of \eqref{sfdeqLT}; cf.\ Remark \ref{concl}. Moreover, since $\frac1s$ is the Laplace transform of $1_{(0,\infty)}(t)$, the function $\frac1s\,\xi(s)$ is the Riemann-Liouville semi-fractional derivative of $1_{(0,\infty)(t)}$ which is
$$\frac{d}{dt}\int_0^t 1_{(0,\infty)}(t-s)\,s^{-1/\alpha}\tau(\log(s))\,ds=t^{-1/\alpha}\tau(\log(t)).$$
Now from $\zeta(\xi(s))=s$ we get $\zeta'(\xi(s))\cdot \xi'(s)=1$ and hence it follows from \eqref{xiBf} and integration by parts
\begin{align*}
\frac1{\zeta'(\xi(s))} & = \xi'(s)=\int_0^\infty e^{-st}t\,d\mu(t)\\
& = \left[-e^{-st}t^{1-1/\alpha}\tau(\log t)\right]_{t=0}^\infty+\int_0^\infty\left(-s\,e^{-st}t+e^{-st}\right)t^{-1/\alpha}\tau(\log t)\,dt\\
& = -s\int_0^\infty e^{-st}t^{1-1/\alpha}\tau(\log t)\,dt+\int_0^\infty e^{-st}t^{-1/\alpha}\tau(\log t)\,dt
\end{align*}
which is the Laplace transform of 
\begin{align*}
& -\frac{d}{dt}\left(t^{1-1/\alpha}\tau(\log t)\right)+t^{-1/\alpha}\tau(\log t)\\
& \quad = -(1-\tfrac1{\alpha})t^{-1/\alpha}\tau(\log t)-t^{-1/\alpha}\tau'(\log t)+t^{-1/\alpha}\tau(\log t)\\
& \quad =t^{-1/\alpha}\left(\tfrac1{\alpha}\,\tau(\log t)-\tau'(\log t)\right).
\end{align*}
Putting things together, Laplace inversion of the right-hand side of \eqref{sfdeqLT} yields
$$\left(-t^{-1/\alpha}\tau(\log t)+\alpha t^{-1/\alpha}\left(\tfrac1{\alpha}\,\tau(\log t)-\tau'(\log t)\right)\right)\delta(x)=-\alpha t^{-1/\alpha}\tau'(\log t)\delta(x)$$
concluding the proof.
\end{proof}
\begin{remark}\label{concl}
Note that in the stable case we have that $\tau$ is constant and thus $\tau'\equiv0$. Hence we recover space-time duality for fractional diffusions in \cite{KelMMM} as a special case. Further note that with \eqref{CtoRL} the term $-\frac1s\,\xi(s)\delta(x)$ appears on both sides of \eqref{sfdeqLT} and can be cancelled. After cancellation the first part on the left-hand side of \eqref{sfdeqLT} is the Laplace transform of the Riemann-Liouville semi-fractional derivative of $h(x,t)$ and as in the proof of Theorem \ref{stduality} we may rewrite \eqref{sfdeqtime} as
$$\mathbb D_{d,\tau}^{1/\alpha}h(x,t)+\frac{\partial}{\partial x}\,h(x,t)=t^{-1/\alpha}\left(\tau(\log t)-\alpha\tau'(\log t) \right)\delta(x),$$
where the semi-fractional derivative acts on the time variable and now the inhomogeniety on the right-hand side is non-negative by Lemma A.1.
\end{remark}

\appendix
\section*{Appendix}
\renewcommand{\thesection}{A}
\setcounter{equation}{0}
\setcounter{theorem}{0}

We first show an elementary result connecting admissable functions with the representation in Corollary 7.4.4 of \cite{MMMHPS}.
\begin{lemma}
Let $\theta:\rr\to(0,\infty)$ be a periodic function and $\alpha>0$. Then the following statements are equivalent.
\begin{enumerate}
\item[(i)] $\theta(y+\delta)\leq e^{\alpha\delta}\theta(y)$ for all $y>0$ and $\delta>0$.
\item[(ii)] $\theta(y+\delta)\leq e^{\alpha\delta}\theta(y)$ for all $y\in\rr$ and $\delta\geq0$.
\item[(iii)] $\theta(y-\delta)\geq e^{-\alpha\delta}\theta(y)$ for all $y\in\rr$ and $\delta\geq0$.
\item[(iv)] The mapping $t\mapsto t^{-\alpha}\theta(\log t)$ is non-increasing for $t>0$.
\end{enumerate}
If $\theta$ is additionally differentiable, then each statement {\rm (i)--(iv)} is equivalent to
\begin{enumerate}
\item[(v)] $\theta'(y)\leq\alpha\,\theta(y)$ for all $y\in\rr$.
\end{enumerate}
\end{lemma}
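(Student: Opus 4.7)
The plan is to establish the equivalences by reformulating everything in terms of monotonicity of the auxiliary function $F(y) := e^{-\alpha y}\theta(y)$, and to invoke periodicity only for the one step where the domain needs to be extended.

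First I would note that (i), (ii), and (iii) are all just restatements of ``$F$ is non-increasing on $\rr$'' (for (ii) and (iii)) or ``$F$ is non-increasing on $(0,\infty)$'' (for (i)). Indeed, the inequality $\theta(y+\delta)\leq e^{\alpha\delta}\theta(y)$ is equivalent to $e^{-\alpha(y+\delta)}\theta(y+\delta)\leq e^{-\alpha y}\theta(y)$, i.e.\ $F(y+\delta)\leq F(y)$, and the statement in (iii) is obtained from (ii) by the substitution $y\mapsto y+\delta$. This directly gives (ii)$\Leftrightarrow$(iii).

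Next, for (i)$\Leftrightarrow$(ii), only the direction (i)$\Rightarrow$(ii) is non-trivial. Given $y\in\rr$ and $\delta\geq0$, pick $n\in\nat$ large enough so that $y+nT>0$, where $T>0$ is the period of $\theta$. By periodicity $\theta(y)=\theta(y+nT)$ and $\theta(y+\delta)=\theta(y+\delta+nT)$, so applying (i) at the point $y':=y+nT>0$ yields (ii). This is the only step using the periodicity hypothesis, so it is where I would expect the main care to be required; luckily it is just a shift argument.

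For (ii)$\Leftrightarrow$(iv), I would substitute $t=e^{y}$, $s=e^{y-\delta}$ with $\delta\geq0$, so that $s\leq t$. Then
\[
s^{-\alpha}\theta(\log s)-t^{-\alpha}\theta(\log t)=e^{-\alpha(y-\delta)}\theta(y-\delta)-e^{-\alpha y}\theta(y)=F(y-\delta)-F(y),
\]
so the map $t\mapsto t^{-\alpha}\theta(\log t)$ is non-increasing on $(0,\infty)$ iff $F$ is non-increasing on $\rr$, which is (ii).

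Finally, for the differentiable case, (v) amounts to $F'(y)=e^{-\alpha y}(\theta'(y)-\alpha\theta(y))\leq0$ for all $y\in\rr$, which is precisely the differential form of ``$F$ non-increasing on $\rr$''. Thus (v)$\Leftrightarrow$(ii), closing the chain. The entire argument is essentially bookkeeping around $F(y)=e^{-\alpha y}\theta(y)$; no estimate is delicate, the only genuinely substantive observation being the periodic-shift trick needed to upgrade (i) to (ii).
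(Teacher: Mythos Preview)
Your proof is correct and follows essentially the same approach as the paper: the periodicity shift to pass from (i) to (ii), the substitution $t=e^{y}$ to relate (ii) and (iv), and the derivative computation for (v) are all the same ingredients the paper uses. Your packaging via the auxiliary function $F(y)=e^{-\alpha y}\theta(y)$ is a tidy unification---the paper instead runs a cycle (i)$\Rightarrow$(ii)$\Rightarrow$(iii)$\Rightarrow$(iv)$\Rightarrow$(i) with separate links (ii)$\Rightarrow$(v)$\Rightarrow$(iv)---but the underlying calculations are identical, and no new idea is introduced on either side.
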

\begin{proof}
{\rm ``(i)$\Rightarrow$(ii)'':} Note that {\rm (ii)} is trivially true for $\delta=0$. If $\theta$ has period $p>0$ and $y\leq0$, choose $k\in\nat$ such that $y+kp>0$. Then $\theta(y+\delta)=\theta(y+kp+\delta)\leq e^{\alpha\delta}\theta(y+kp)=e^{\alpha\delta}\theta(y)$.

{\rm ``(ii)$\Rightarrow$(iii)'':} Write $z=y-\delta$ then $\theta(y-\delta)=\theta(z)\geq e^{-\alpha\delta}\theta(z+\delta)=e^{-\alpha\delta}\theta(y)$.

{\rm ``(iii)$\Rightarrow$(iv)'':} For $0<s<t$ write $t=e^y$ and $s=e^{y-\delta}$ for some $y\in\rr$ and $\delta>0$. Then $s^{-\alpha}\theta(\log s)=e^{-\alpha(y-\delta)}\theta(y-\delta)\geq e^{-\alpha y}\theta(y)=t^{-\alpha}\theta(\log t)$.

{\rm ``(iv)$\Rightarrow$(i)'':} Write $y=\log t$ and $\delta=\log\gamma$ for some $t,\gamma>1$. Then $\theta(y+\delta)=\theta(\log(\gamma t))=(\gamma t)^{\alpha}(\gamma t)^{-\alpha}\theta(\log(\gamma t))\leq(\gamma t)^{\alpha}t^{-\alpha}\theta(\log t)=e^{\alpha\delta}\theta(y)$.

{\rm ``(ii)$\Rightarrow$(v)'':}  For fixed $y\in\rr$ we have $\frac{\theta(y+\delta)-\theta(y)}{\delta}\leq\frac{e^{\alpha\delta}-1}{\delta}\,\theta(y)$ for all $\delta>0$ and for differentiable $\theta$ as $\delta\downarrow0$ we get $\theta'(y)\leq\alpha\,\theta(y)$.

{\rm ``(v)$\Rightarrow$(iv)'':}  We have $\frac{d}{dt}(t^{-\alpha}\theta(\log t))=t^{-\alpha-1}(-\alpha\theta(\log t)+\theta'(\log t))\leq0$.
\end{proof}
Now we derive a formula for higher order derivatives of inverse functions used in Section 3 for which we couldn't find a suitable reference. 
\begin{lemma}
Let $A,B\subseteq\rr$ be open and let $f:A\to B$ be an invertible $C^n(A)$-function for some $n\in\nat$ such that $f'(x)\not=0$ for all $x\in A$. Then $f^{-1}:B\to A$ is of class $C^n(B)$ and for $n\geq2$ we have
\begin{align*}
(f^{-1})^{(n)}(x)=-\,\frac{1}{f'(f^{-1}(x))} & \sum_{k_1+2k_2+\cdots+(n-1)k_{n-1}=n}\frac{n!}{k_1!\cdots k_{n-1}!}\,f^{(k_1+\cdots+k_{n-1})}(f^{-1}(x))\\
& \qquad\qquad\cdot\prod_{j=1}^{n-1}\left(\frac{(f^{-1})^{(j)}(x)}{j!}\right)^{k_j}.
\end{align*}
\end{lemma}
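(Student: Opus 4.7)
The plan is to derive the formula by applying Faà di Bruno's formula to the identity $f(f^{-1}(x))=x$ for $x\in B$ and then isolating the top-order term, which is $(f^{-1})^{(n)}(x)$ multiplied by $f'(f^{-1}(x))$. The regularity assertion $f^{-1}\in C^n(B)$ follows by a standard induction based on the inverse function theorem, so the real content of the lemma is purely algebraic.

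First I would establish $f^{-1}\in C^n(B)$ by induction on $n$. The inverse function theorem yields $f^{-1}\in C^1(B)$ together with $(f^{-1})'(x)=1/f'(f^{-1}(x))$. Assuming inductively that $f^{-1}\in C^{n-1}(B)$, the map $f'\circ f^{-1}$ is $C^{n-1}(B)$ and nowhere vanishing, hence its reciprocal is $C^{n-1}(B)$, which upgrades $f^{-1}$ to $C^n(B)$.

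Next I would apply the classical Faà di Bruno formula to the composition $f\circ f^{-1}$ to obtain, for every $n\geq 1$,
$$\frac{d^n}{dx^n}f(f^{-1}(x))=\sum_{k_1+2k_2+\cdots+nk_n=n}\frac{n!}{k_1!\cdots k_n!}\,f^{(k_1+\cdots+k_n)}(f^{-1}(x))\prod_{j=1}^{n}\left(\frac{(f^{-1})^{(j)}(x)}{j!}\right)^{k_j},$$
the sum being taken over non-negative integers $k_1,\ldots,k_n$ satisfying the displayed constraint. Since $f(f^{-1}(x))=x$, the left-hand side vanishes whenever $n\geq 2$.

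The crucial combinatorial observation is that the constraint $\sum_j j\,k_j=n$ with $n\geq 2$ forces $k_n\in\{0,1\}$, and $k_n=1$ forces $k_1=\cdots=k_{n-1}=0$. Thus exactly one summand has $k_n=1$; its value simplifies to $f'(f^{-1}(x))\cdot(f^{-1})^{(n)}(x)$ after cancelling $n!/n!$. Every remaining summand has $k_n=0$, so only $k_1,\ldots,k_{n-1}$ can be positive, and the product as well as the order of the derivative of $f$ are exactly as in the right-hand side of the lemma. Isolating the $k_n=1$ term and solving the resulting linear equation for $(f^{-1})^{(n)}(x)$ (using $f'(f^{-1}(x))\neq 0$) yields the claimed identity with the prefactor $-1/f'(f^{-1}(x))$.

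The only place where care is required is the bookkeeping: one must check that $k_n\leq 1$ under the constraint, that the isolated term carries the correct coefficient $1$ (from $n!/(0!\cdots 0!\cdot 1!\cdot n!^{1})$), and that the remaining sum matches exactly the indexing $k_1+2k_2+\cdots+(n-1)k_{n-1}=n$ written in the statement. There is no analytic obstacle beyond the regularity induction; the lemma is essentially a repackaging of Faà di Bruno's identity.
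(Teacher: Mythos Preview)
Your proposal is correct and follows essentially the same route as the paper: apply Fa\`a di Bruno to $f\circ f^{-1}$, separate the unique $k_n=1$ term $f'(f^{-1}(x))\,(f^{-1})^{(n)}(x)$ from the remaining $k_n=0$ terms, and solve using $f'\neq 0$ and the vanishing of the $n$-th derivative of the identity for $n\geq 2$. The only difference is that you include the inductive regularity argument for $f^{-1}\in C^n(B)$, which the paper omits as standard.
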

\begin{remark}
For $n=1$ it is well known that $(f^{-1})'(x)=\frac{1}{f'(f^{-1}(x))}$. For $n=2$ we have $k_1=2$ in Lemma A.3 so that
$$(f^{-1})''(x)=-\,\frac{1}{f'(f^{-1}(x))}\,f''(f^{-1}(x))\cdot((f^{-1})'(x))^2=-\,\frac{f''(f^{-1}(x))}{(f'(f^{-1}(x)))^3}.$$
Iterating this procedure leads to a formula for $(f^{-1})^{(n)}$ given in \cite{ZL} not involving the derivatives of lower order $(f^{-1})',\ldots,(f^{-1})^{(n-1)}$. 
\end{remark}
\begin{proof}[Proof of Lemma A.2]
We use Fa\`a di Bruno's formula of higher order chain rule 
\begin{align*}
\frac{d^n}{dx^n}\,f(g(x))= & \sum_{k_1+2k_2+\cdots+nk_n=n}\frac{n!}{k_1!\cdots k_{n}!}\,f^{(k_1+\cdots+k_{n})}(g(x))\prod_{j=1}^{n}\left(\frac{(g^{(j)}(x)}{j!}\right)^{k_j}.
\end{align*}
For $j=n$ we must have $k_n=1$ and $k_1=\cdots=k_{n-1}=0$, otherwise $k_n=0$ if $k_j\geq1$ for some $j\in\{1,\ldots,n-1\}$. Hence we get
\begin{align*}
\frac{d^n}{dx^n}\,f(g(x))= & \sum_{k_1+2k_2+\cdots+(n-1)k_{n-1}=n}\frac{n!}{k_1!\cdots k_{n}!}\,f^{(k_1+\cdots+k_{n-1})}(g(x))\prod_{j=1}^{n-1}\left(\frac{(g^{(j)}(x)}{j!}\right)^{k_j}\\
& \qquad\qquad+f'(g(x))\,g^{(n)}(x).
\end{align*}
If $g=f^{-1}$, we know that $\frac{d^n}{dx^n}\,f(f^{-1}(x))=0$ for $n\geq2$ which directly leads to the stated formula for $(f^{-1})^{(n)}$.
\end{proof}

\bibliographystyle{plain}

\end{document}